\providecommand{\U}[1]{\protect\rule{.1in}{.1in}}
\providecommand{\U}[1]{\protect\rule{.1in}{.1in}}
\newtheorem{theorem}{Theorem}[section]
\newtheorem{proposition}[theorem]{Proposition}
\newtheorem{corollary}[theorem]{Corollary}
\newtheorem{remark}{Remark}
\theoremstyle{remark}
\newcommand{\remove}[1]{ }
\def\be{\begin{equation}}
\def\ee{\end{equation}}
\def\ba{\begin{eqnarray}}
\def\ea{\end{eqnarray}}
\def\vf{\varphi}
\numberwithin{equation}{section}
\begin{document}

	\pagenumbering{arabic}	
\title[Stability of KdV equation]{Weak damping for the Korteweg-de Vries equation}
\author[Capistrano--Filho]{Roberto de A. Capistrano--Filho*}\thanks{*This work is dedicated to my daughter Helena}
\address{Departamento de Matem\'atica,  Universidade Federal de Pernambuco (UFPE), 50740-545, Recife (PE), Brazil.}
\email{roberto.capistranofilho@ufpe.br}
\subjclass[2020]{35Q53, 93B07, 93D15}
\keywords{KdV equation, Stabilization, Observability inequality, Unique continuation property}
%\date{Version 2012-11-13-a}

\begin{abstract}
For more than 20 years, the Korteweg-de Vries equation has been intensively explored from the mathematical point of view. Regarding control theory, when adding an internal force term in this equation, it is well known that the Korteweg-de Vries equation is exponentially stable in a bounded domain. In this work, we propose a weak forcing mechanism, with a lower cost than that already existing in the literature,  to achieve the result of the global exponential stability to the Korteweg-de Vries equation.
\end{abstract}
\maketitle

\section{Introduction\label{Sec0}}

\subsection{Historical review}
In 1834 John Scott Russell, a Scottish naval engineer, was observing the Union Canal in Scotland when he unexpectedly witnessed a very special physical phenomenon which he called a wave of translation \cite{Russel1844}. He saw a particular wave traveling through this channel without losing its shape or velocity, and was so captivated by this event that he focused his attention on these waves for several years, not only built water wave tanks at his home conducting  practical and theoretical research into these types of waves, but also challenged the mathematical community to prove theoretically the existence of his solitary waves and to give an a priori demonstration a posteriori.

A number of researchers took up Russell’s challenge. Boussinesq  was  the  first  to  explain  the  existence  of  Scott  Russell’s  solitary wave  mathematically.  He  employed  a  variety  of  asymptotically  equivalent equations to describe water waves in the small-amplitude, long-wave regime. In fact, several  works  presented  to  the  Paris  Academy  of  Sciences  in  1871  and 1872, Boussinesq addressed the problem of the persistence of solitary waves of permanent form on a fluid interface \cite{Boussinesq,Boussinesq1,Boussinesq2,Boussinesq3}.
It is important to mention that in 1876, the English physicist Lord Rayleigh obtained a different result \cite{Rayleigh}.

After Boussinesq theory,  the Dutch mathematicians D. J. Korteweg and his student G. de Vries  \cite{Korteweg} derived 
a nonlinear partial differential equation in 1895 that  possesses  a solution describing the phenomenon discovered by Russell, 
\begin{equation}\label{kdv}\frac{\partial\eta}{\partial{t}}=\frac{3}{2}\sqrt{\frac{g}{l}}\frac{\partial}{\partial{x}}\left(\frac{1}{2}\eta^2+\frac{3}{2}\alpha\eta+\frac{1}{3}\beta\frac{\partial^2\eta}{\partial{x^2}}\right),
\end{equation}
in which $\eta$ is the surface elevation above the equilibrium level, $l$ is an arbitrary constant related to the motion of the liquid, $g$ is the gravitational constant, and $\beta=\frac{l^3}{3}-\frac{Tl}{\rho g}$ with surface capillary tension $T$ and density $\rho$. 
The equation (\ref{kdv})  is called   the Korteweg-de Vries equation in the literature, often abbreviated as the KdV equation,   although  it had appeared explicitly in 
\cite{Boussinesq3}, as equation (283bis) in  a  footnote  on  page  360\footnote{The interested readers are referred to \cite{jager2006, pego1998} for history and origins  of the Korteweg-de Vries equation.}.

Eliminating the physical constants by using the following change of variables
$$t\to\frac{1}{2}\sqrt{\frac{g}{l\beta}}t, \quad x\to-\frac{x}{\beta}, \quad u\to-\left(\frac{1}{2}\eta+\frac{1}{3}\alpha\right)$$
one obtains the standard Korteweg-de Vries (KdV) equation 
\begin{equation}\label{kdv1}
u_t + 6uu_x + u_{xxx}= 0
\end{equation}
which is now commonly accepted as a mathematical model for the unidirectional propagation of small-amplitude long waves in nonlinear dispersive systems. It
turns out that the equation is not only a good model for some water waves but
also a very useful approximation model in nonlinear studies whenever one
wishes to include and balance a weak nonlinearity and weak dispersive effects
\cite{Miura}. 

\subsection{Motivation and setting of the problem} Consider the KdV equation \eqref{kdv1}. Let us introduce a source term in this equation as follows: 
\begin{equation}\label{kdv2}
u_t + 6uu_x + u_{xxx}+ f=0,
\end{equation}
where $f$ will be defined as
\begin{equation}
f:=Gu\left(  x,t\right)  =1_{\omega}\left(  u\left(  x,t\right)  -\frac
{1}{\left\vert \omega\right\vert }\int_{\omega}u\left(  x,t\right)  dx\right). \label{I11a}%
\end{equation}
Here,  $1_{\omega}$ denotes the
characteristic function of the set $\omega$. Notice that this term can be seen as a damping mechanism, which helps the energy of the system to dissipate. In fact, let us consider $\omega$ subset of a domain $\mathcal{M}:=\mathbb{T}$ or $\mathcal{M}:=\mathbb{R}$ and the total energy of the linear equation associated to \eqref{kdv2}, in this case, is given by
 \begin{equation}\label{6aa}
E_s(t) = \frac{1}{2}\int_{\mathcal{M}}\left\vert u\right\vert^2\left(  x,t\right)  dx.
\end{equation}
Then, we can (formally) verify that
 \[
\frac{d}{dt}\int_{\mathcal{M}}\left\vert u\right\vert^2\left(  x,t\right)  dx=-\left\Vert Gu\right\Vert^2_{L^2(\mathcal{M})}\text{, for any }t\in\mathbb{R}.
\]
The inequality above shows that the term $G$ plays the role of feedback mechanism and, consequently, we can investigate whether the solutions of \eqref{kdv2} tend to zero as $t\rightarrow \infty$ and under what rate they decay.

Inspired by this, in our work we will study the full KdV equation from a control point of
view posed in a bounded domain $(0,L)\subset\mathbb{R}$ with a weak forcing term $Gh$ added as a control input, namely:
\begin{equation}
\left\{
\begin{array}
[c]{lll}%
u_{t}+u_{x}+uu_{x}+u_{xxx}+Gh=0 &  &\text{ in }\left(  0,L\right)
\times\left(  0,T\right)  \text{,}\\
u\left(  0,t\right)  =u\left(  L,t\right)  =u_{x}\left(  L,t\right)
=0, &  &\text{ in } \left(  0,T\right)  \text{,}\\
u\left(  x,0\right)  =u_{0}\left(  x\right),  &  &  \text{ in }\left(
0,L\right) .
\end{array}
\right.  \label{I10}%
\end{equation}
Here, $G$ is the operator defined by
\begin{equation}
Gh\left(  x,t\right)  =1_{\omega}\left(  h\left(  x,t\right)  -\frac
{1}{\left\vert \omega\right\vert }\int_{\omega}h\left(  x,t\right)  dx\right)
, \label{I11}%
\end{equation}
where $h$ is considered as a new control input with $\omega\subset(0,L)$ and $1_{\omega}$ denotes the
characteristic function of the set $\omega$. 

Thus, we are interested in proving the stability for solutions of \eqref{I10}, which can be expressed in the following natural issue.

%\vspace{0.2cm}
%
%\noindent\textbf{Exact control problem}:\textit{ Given an initial state
%$u_0$ and a terminal state $u_1$ in a certain space, can one find an appropriate control input $h$ so
%that equation \eqref{I10} admits a solution $u$ which
%satisfies $u(\cdot,0)=u_0$ and $u(\cdot,T)=u_1$?}

\vspace{0.2cm}

\noindent\textbf{Stabilization problem}: \textit{Can one find a feedback control law
$h$ so that the resulting closed-loop system \eqref{I10} is asymptotically stable when $t\rightarrow\infty$?}

\subsection{Previous results} The study of the controllability and stabilization to the KdV equation started with the works
of Russell and Zhang  \cite{Russell1}  for a system with periodic
boundary conditions and an internal control. Since then, both the controllability and the stabilization have
been intensively studied. In particular, the exact boundary controllability of KdV on
a finite domain was investigated in e.g.  \cite{cerpa,cerpa1,coron,GG,GG1,Rosier,Rosier2,Zhang2}.

Most of these works deal with the following system
\begin{equation}
\left\{
\begin{array}
[c]{lll}%
u_{t}+u_{x}+u_{xxx}+uu_{x}=0 &  & \text{in }(  0,T)  \times(
0,L)  \text{,}\\
u(  t,0)  =h_{1}(t),\,u(  t,L)  =h_{2}(t),\,u_{x}(
t,L)  =h_{3}(t) &  & \text{in }(  0,T), 
\end{array}
\right.  \label{I3}
\end{equation}
in which the boundary data $h_1,h_2,h_3$ can be chosen as control inputs. 

The boundary control problem of the KdV equation was first studied by Rosier \cite{Rosier} who considered system \eqref{I3} with only one boundary control input $h_{3}$ (i.e., $h_{1}=h_{2}=0$) in action. He showed that the system \eqref{I3} is locally exactly controllable in the space $L^{2}(0,L)$. Precisely, the result can be read as follows:

\vspace{0.2cm}

\noindent\textbf{Theorem $\mathcal{A}$} \cite{Rosier}: \textit{Let $T>0$ be given and assume
\begin{equation}
L\notin\mathcal{N}:=\left\{  2\pi\sqrt{\frac{j^{2}+l^{2}+jl}{3}}%
:j,l\in\mathbb{N}^{\ast}\right\}  \text{.} \label{I4}%
\end{equation}
There exists a $\delta>0$ such that if $\phi$, $\psi\in L^{2}\left(  0,L\right)  $ satisfies
\[
\left\Vert \phi\right\Vert _{L^{2}\left(  0,L\right)  }+\left\Vert
\psi\right\Vert _{L^{2}\left(  0,L\right)  }\leq\delta,
\]
then one can find a control input $h_{3}\in L^{2}\left(  0,T\right)  $ such
that the system \eqref{I3}, with $h_{1}=h_{2}=0$, admits a solution
\[
u\in C\left(  \left[  0,T\right]  ;L^{2}\left(  0,L\right)  \right)  \cap
L^{2}\left(  0,T;H^{1}\left(  0,L\right)  \right)
\]
satisfying%
\[
u\left(  x,0\right)  =\phi\left(  x\right)  \text{, }u\left(  x,T\right)
=\psi\left(  x\right)  \text{.}%
\]
}

Theorem $\mathcal{A}$ was first proved for the associated linear system using the
Hilbert Uniqueness Method due J.-L. Lions \cite{Lions} without the smallness assumption on the initial state $\phi$ and the terminal state $\psi$. The linear result was then
extended to the nonlinear system to obtain Theorem $\mathcal{A}$ by using the
contraction mapping principle.

Still regarding the KdV in a bounded domain, Chapouly \cite{Chapouly} studied  the exact controllability to the trajectories and the global exact controllability of a nonlinear KdV equation in a bounded interval. Precisely, first, she introduced two more controls as follows
\begin{equation}
\left\{
\begin{array}
[c]{lll}%
u_{t}+u_{x}+uu_{x}+u_{xxx}=g\left(  t\right), & & x\in\left(
0,L\right)  ,\text{ }t>0,\\
u\left(  0,t\right)  =h_{1}\left(  t\right)  \text{, }u\left(  L,t\right)
=h_2(t)\text{, } u_{x}\left(  L,t\right)  =0,&& t>0,
\end{array}
\right.  \label{C1a}%
\end{equation}
where $g=g(t)$ is independent of the spatial variable $x$ and is considered as
a new control input.
%with this control input $g(t)$ in action.
Then, Chapouly proved that, thanks to these three controls, the
global exact controllability to the trajectories, for any positive time T, holds. Finally, she introduced a fourth control on the first derivative at the right endpoint, namely, 
\begin{equation*}
\left\{
\begin{array}
[c]{lll}%
u_{t}+u_{x}+uu_{x}+u_{xxx}=g\left(  t\right), & & x\in\left(
0,L\right)  ,\text{ }t>0,\\
u\left(  0,t\right)  =h_{1}\left(  t\right)  \text{, }u\left(  L,t\right)
=h_2(t)\text{, } u_{x}\left(  L,t\right)  =h_3(t),&& t>0,
\end{array}
\right. 
\end{equation*}
where $g=g(t)$ has the same structure as in \eqref{C1a}. With this equation in hand, she showed the global
exact controllability, for any positive time T.

Considering now a periodic domain $\mathbb{T}$, Laurent \textit{et al.} in \cite{Laurent} worked with the following equation:
\begin{equation}
u_{t}+uu_{x}+u_{xxx}=0\text{, } \quad x\in\mathbb{T}\text{, }t\in\mathbb{R}\text{.}
\label{I5}%
\end{equation}
Equation \eqref{I5} is known to possess an infinite set of conserved integral
quantities, of which the first three are%
\[
I_{1}\left(  t\right)  =%
%TCIMACRO{\dint _{\mathbb{T}}}%
%BeginExpansion
{\displaystyle\int_{\mathbb{T}}}
%EndExpansion
u\left(  x,t\right)  dx\text{,}%
\quad
I_{2}\left(  t\right)  =%
%TCIMACRO{\dint _{\mathbb{T}}}%
%BeginExpansion
{\displaystyle\int_{\mathbb{T}}}
%EndExpansion
u^{2}\left(  x,t\right)  dx
\]
and
\[
I_{3}\left(  t\right)  =%
%TCIMACRO{\dint _{\mathbb{T}}}%
%BeginExpansion
{\displaystyle\int_{\mathbb{T}}}
%EndExpansion
\left(  u_{x}^{2}\left(  x,t\right)  -\frac{1}{3}u^{3}\left(  x,t\right)
\right)  dx\text{.}%
\]

From the historical origins \cite{Boussinesq,Korteweg,Miura} of the KdV equation, involving the behavior of water waves in a shallow channel, it is natural to think of $I_{1}$ and $I_{2}$ as expressing
conservation of volume (or mass) and energy, respectively. The Cauchy problem
for equation (\ref{I5}) has been intensively studied for many years (see
\cite{Bourgain,Kato,Kenig,Saut} and the references therein). 

With respect to control theory, Laurent \textit{et al.} \cite{Laurent} studied the equation \eqref{I5} from a control point of
view with a forcing term $f=f(x,t)$ added to the equation as a control input:%
\begin{equation}
u_{t}+uu_{x}+u_{xxx}=f\text{, \ }x\in\mathbb{T}\text{, }\quad t\in\mathbb{R}\text{,}
\label{I6}%
\end{equation}
where $f$ is assumed to be supported in a given open set $\omega
\subset\mathbb{T}$. However, in the periodic domain, control problems were first studied by Russell and Zhang in \cite{Russell,Russell1}. In their works, in order to keep the mass $I_{1}(t)$
conserved, the control input $f(x,t)$ is chosen to be of the form
\begin{equation}
f\left(  x,t\right)  =\left[  Gh\right]  \left(  x,t\right)  :=g\left(
x\right)  \left(  h\left(  x,t\right)  -\int_{\mathbb{T}}g\left(  y\right)
h\left(  y,t\right)  dy\right), \label{I8}%
\end{equation}
where $h$ is considered as a new control input, and $g(x)$ is a given
non-negative smooth function such that $\{g>0\}=\omega$ and%
\[
2\pi\left[  g\right]  =\int_{\mathbb{T}}g\left(  x\right)  dx=1\text{.}%
\]

For the chosen $g$, it is easy to see that%
\[
\frac{d}{dt}\int_{\mathbb{T}}u\left(  x,t\right)  dx=\int_{\mathbb{T}}f\left(
x,t\right)  dx=0\text{, for any }t\in\mathbb{R}%
\]
for any solution $u=u(x,t)$ of the system%
\begin{equation}
u_{t}+uu_{x}+u_{xxx}=Gh. \label{I9}%
\end{equation}
Thus, the mass of the system is indeed conserved. Therefore, the following results are due to Russell and Zhang.

\vspace{0.2cm}

\noindent\textbf{Theorem $\mathcal{B}$} \cite{Russell1}: \textit{Let $s\geq0$ and $T>0$ be given. There exists a $\delta>0$ such
that for any $u_{0},u_{1}\in H^{s}(\mathbb{T})$ with $[u_{0}]=[u_{1}]$
satisfying
\[
\left\Vert u_{0}\right\Vert _{H^{s}}\leq\delta\text{, \ }\left\Vert
u_{1}\right\Vert _{H^{s}}\leq\delta,
\]
one can find a control input $h\in L^{2}(0,T;H^{s}(\mathbb{T}))$ such that the
system (\ref{I9}) admits a solution $u\in C([0,T];H^{s}(\mathbb{T}))$
satisfying $u(x,0)=u_{0}(x),u(x,T)=u_{1}(x)$.}

\vspace{0.2cm}

Note that one can always find an appropriate control input $h$ to guide system
(\ref{I6}) from a given initial state $u_{0}$ to a terminal state $u_{1}$ so
long as their amplitudes are small and $[u_{0}]=[u_{1}]$. With this result the two following questions arise naturally, which have already been cited in this work.

\vspace{0.2cm}

\noindent\textbf{Question 1}: \textit{Can one still guide the system by choosing
appropriate control input $h$ from a given initial state $u_0$ to a given terminal state $u
_1$ when $u_0$ or $u_1$ have large amplitude?}

\vspace{0.2cm}

\noindent\textbf{Question 2}: \textit{Do the large amplitude solutions of the
closed-loop system (\ref{I6}) decay exponentially as $t\rightarrow\infty$}?

Laurent \textit{et al.}  gave the positive answers to these questions:

\vspace{0.2cm}

\noindent\textbf{Theorem $\mathcal{C}$} \cite{Laurent}: \textit{Let $s\geq0$, $R>0$ and $\mu\in\mathbb{R}$ be given. There exists
a $T>0$ such that for any $u_{0},u_{1}\in H^{s}(\mathbb{T})$ with
$[u_{0}]=[u_{1}]=\mu$ are such that
\[
\left\Vert u_{0}\right\Vert _{H^{s}}\leq R\text{, \ }\left\Vert u_{1}%
\right\Vert _{H^{s}}\leq R,
\]
then one can find a control input $h\in L^{2}(0,T;H^{s}(\mathbb{T}))$ such
that the system (\ref{I6}) admits a solution $u\in C([0,T];H^{s}(\mathbb{T}))$
satisfying
\[
u(x,0)=u_{0}(x)\quad \text{and}\quad u(x,T)=u_{1}(x).
\]}

\vspace{0.2cm}

\noindent\textbf{Theorem $\mathcal{D}$} \cite{Laurent}:   \textit{Let $s\geq0$, $R>0$ and $\mu\in\mathbb{R}$ be given. There exists
a $k>0$ such that for any $u_{0}\in H^{s}(\mathbb{T})$ with $[u_{0}]=\mu$ the
corresponding solution \ of the system (\ref{I6}) satisfies%
\[
\left\Vert u\left(  \cdot,t\right)  -\left[  u_{0}\right]  \right\Vert
_{H^{s}}\leq\alpha_{s,\mu}\left(  \left\Vert u_{0}-\left[  u_{0}\right]
\right\Vert _{H^{0}}\right)  e^{-kt}\left\Vert u_{0}-\left[  u_{0}\right]
\right\Vert _{H^{s}}\text{ for all }t>0\text{,}%
\]
where $\alpha_{s,\mu}:\mathbb{R}^{+}\longrightarrow\mathbb{R}^{+}$ is a
nondecreasing continuous function depending on $s$ and $\mu$.}

\vspace{0.2cm}

These results are established with the aid of certain properties of propagation of
compactness and regularity in Bourgain spaces for the solutions of the associated
linear system. Finally, with Slemrod’s feedback law, the resulting closed-loop system is
shown to be locally exponentially stable with an arbitrarily large decay rate. 

Still with respect to problems of stabilization, Pazoto \cite{Pazoto} proved the exponential decay for the energy of solutions of the Korteweg-de Vries equation in a bounded interval with a localized damping term, precisely, with a term $a=a(x)$ satisfying 
\begin{equation}\label{pazoto}
\left\{\begin{array}{l}
a \in L^{\infty}(0, L) \text { and } a(x) \geq a_{0}>0 \text { a.e. in } \omega, \\
\text { where } \omega \text { is a nonempty open subset of }(0, L).
\end{array}\right.
\end{equation}
With this mechanism the author showed that 
$$
\frac{\mathrm{d} E}{\mathrm{d} t}=-\int_{0}^{L} a(x)|u(x, t)|^{2} \mathrm{d} x-\frac{1}{2}\left|u_{x}(0, t)\right|^{2}
$$
with
$$
E(t)=\frac{1}{2} \int_{0}^{L}|u(x, t)|^{2} \mathrm{d} x.
$$
This indicates that the term $a(x) u$ in the equation plays the role of a feedback damping mechanism. Finally, following the method in Menzala \textit{et al.} \cite{menzala}  which combines energy estimates, multipliers and compactness arguments, the problem is reduced to prove the unique continuation of weak solutions. The result proved by the author can be read as follows.

\vspace{0.2cm}

\noindent\textbf{Theorem $\mathcal{E}$} \cite{Pazoto}: \textit{For any $L>0,$ any damping potential a satisfying \eqref{pazoto} and $R>0,$ there exist $c=c(R)>0$ and $\mu=\mu(R)>0$ such that
$$
E(t) \leq c\left\|u_{0}\right\|_{L^{2}(0, L)}^{2} \mathrm{e}^{-\mu t},
$$
holds for all $t \geq 0$ and any solution of
\begin{equation}\label{pazoto2}
\left\{
\begin{array}
[c]{lll}%
u_{t}+u_{x}+uu_{x}+u_{xxx}+a(x)u=0 &  & \text{in
}(  0,T)  \times(  0,L)  \text{,}\\
u(  t,0)  =u(  t,L)  =u_{x}(  t,L)  =0 &  &
\text{in }(  0,T)  \text{,}\\
u(  0,x)  =u_{0}(  x)  &  & \text{in }(
0,L)  \text{,}%
\end{array}
\right.
\end{equation}
with $u_{0} \in L^{2}(0, L)$ such that $\left\|u_{0}\right\|_{L^{2}(0, L)} \leq R$.}

\vspace{0.2cm}

Massarolo \textit{et al.}  showed in \cite{massarolo} that a very weak amount of
additional damping stabilizes the KdV equation. In particular, a damping mechanism dissipating the $L^2-$norm as $a(\dot)$ does is not needed. Dissipating the $H^{-1}-$norm proves to be. For instance, one can take the damping term $Bu$ instead of $a(x)u$, where $Bu$ is defined by
$$
B=1_{\omega}\left(-\frac{\mathrm{d}^{2}}{\mathrm{d} x^{2}}\right)^{-1}=1_{\omega}(-\Delta)^{-1},
$$
where $1_{\omega}$ denotes the characteristic function of the set $\omega,\left(-d^{2} / d x^{2}\right)^{-1}$ is the inverse of the Laplace operator with Dirichlet boundary conditions (on the boundary of $\omega\subset(0,L)$). Under the above considerations, they observed that (formally) the operator $B$ satisfies
$$
\begin{aligned}
\int_{0}^{L} u B u \mathrm{d} x &=\int_{0}^{L} u\left[-1_{\omega} \Delta^{-1} u\right] \mathrm{d} x=-\int_{\omega}\left(\Delta^{-1} u\right) \Delta\left(\Delta^{-1} u\right) \mathrm{d} x \\
&=-\left.\Delta^{-1} u\left[\Delta^{-1} u\right]_{x}\right|_{\partial \omega}+\int_{\omega}\left|\left[\Delta^{-1} u\right]_{x}\right|^{2} \mathrm{d} x \\
&=\left\|\left[\Delta^{-1} u\right]_{x}\right\|_{L^{2}(\omega)}^{2}=\left\|\Delta^{-1} u\right\|_{H_{0}^{1}(\omega)}^{2}=\|u\|_{H^{-1}(\omega)}^{2}.
\end{aligned}
$$
Consequently, the total energy $E(t)$ associated with \eqref{pazoto2} with $Bu$ instead of $a(x)u$, satisfies
$$
\frac{\mathrm{d}}{\mathrm{d} t} \int_{0}^{L}|u(x, t)|^{2} \mathrm{d} x=-u_{x}^{2}(0, t)-\|u\|_{H^{-1}(\omega)}^{2},
$$
where
$$
E(t)=\int_{0}^{L}|u(x, t)|^{2} \mathrm{d} x.
$$
This indicates that the term $Bu$ plays the role of a feedback damping mechanism. Consequently, they  investigated whether $E(t)$ tends to zero as $t \rightarrow \infty$ and the uniform rate at which it may decay, showing the similar result as in Theorem $\mathcal{E}$.

To finish that small sample of the previous works, let us present another result of controllability for the KdV equation posed on a bounded domain. Recently, the author in collaboration with Pazoto and Rosier, showed in \cite{CaRoPa}  results for the following system,
\begin{equation}
\left\{
\begin{array}
[c]{lll}%
u_{t}+u_{x}+uu_{x}+u_{xxx}=1_{\omega}f(  t,x)  &  & \text{in
}(  0,T)  \times(  0,L)  \text{,}\\
u(  t,0)  =u(  t,L)  =u_{x}(  t,L)  =0 &  &
\text{in }(  0,T)  \text{,}\\
u(  0,x)  =u_{0}(  x)  &  & \text{in }(
0,L)  \text{,}%
\end{array}
\right. \label{ncn2}%
\end{equation}
considering $f$ as a control input and $1_{\omega}$ is a characteristic function supported on $\omega\subset (0,L)$.  Precisely, when the control acts in a neighborhood of $x = L$,  they obtained the exact controllability in the weighted Sobolev space $L^{2}_{\frac{1}{L-x}  dx}$ defined as
\[
L^{2}_{\frac{1}{L-x}  dx}:=\{  u\in L^1_{loc}(0,L);
\int_{0}^{L}\frac{\left\vert u(x)
\right\vert ^{2}}{ L-x  } dx<\infty\} .
\]
More precisely, they proved the following result:
\vspace{0.2cm}

\noindent\textbf{Theorem $\mathcal{F}$} \cite{CaRoPa}:   \textit{Let $T>0$, $\omega=(l_1,l_2)=(L-\nu ,L)$ where $0<\nu <L$.
Then, there exists $\delta>0$ such that for any $u_{0}$, $u_{1}\in L^2_{  \frac{1}{L-x}  dx}$
with
\[
\left\Vert u_{0}\right\Vert _{L^2_{ \frac{1}{L-x}  dx} } \leq\delta \ \text{ and } \  
\left\Vert u_{1}\right\Vert _{L^2_{  \frac{1}{L-x}  dx}}\leq\delta ,
\]
one can find a control input $f\in L^{2}(  0,T;H^{-1}(0,L))  $ with $\text{supp} (f)\subset (0,T) \times \omega$ 
such that the solution $u\in C^0([0,L],L^2 (0,L) )
\cap L^2(0,T,H^1(0,L))$ of (\ref{ncn2}) satisfies $u(T,.)  =u_{1}\,\text{ in } (0,L)$ and $u\in C^0([0,T],L^2_{  \frac{1}{L-x}  dx} )$. Furthermore, 
$f\in L^2_{ (T-t) dt}(0,T,L^2(0,L))$.}

\vspace{0.2cm}

We caution that this is only a small sample of the extant works in this field. Now, we are able to present our result in this manuscript.

\subsection{Main result and heuristic of the paper}  The aim of this manuscript is to address the stabilization issue  for the KdV equation on a bounded domain with a \textit{weak source (or forcing) term}, as a distributed control, namely 
\begin{equation}
\left\{
\begin{array}
[c]{lll}%
u_{t}+u_{x}+uu_{x}+u_{xxx}+Gh=0, && \text{in }\left(  0,L\right)
\times\left(  0,T\right)  \text{,}\\
u\left(  0,t\right)  =u\left(  L,t\right)  =u_{x}\left(  L,t\right)
=0, && \text{in }\left(  0,T\right)  \text{,}\\
u\left(  x,0\right)  =u_{0}\left(  x\right), &&  \text{in }\left(
0,L\right)  ,
\end{array}
\right.  \label{I10aa}%
\end{equation}
where $G$ is the operator defined by \eqref{I11}. 

Notice that with a good choose of $Gh$, that is, 
\begin{equation}\label{h2}
Gh:=Gu\left(  x,t\right)  =1_{\omega}\left(  u\left(  x,t\right)  -\frac
{1}{\left\vert \omega\right\vert }\int_{\omega}u\left(  x,t\right)  dx\right),
\end{equation}
the energy associate 
\[
I_{2}\left(  t\right)  =%
%TCIMACRO{\dint _{\mathbb{T}}}%
%BeginExpansion
{\displaystyle\int_0^L}
%EndExpansion
u^{2}\left(  x,t\right)  dx
\]
verify that
 \[
\frac{d}{dt}\int_0^L u^2\left(  x,t\right)  dx\leq-\left\Vert Gu\right\Vert^2_{L^2(0,L)}\text{, for any }t>0,
\]
at least for the linear system $$u_{t}+u_{x}+u_{xxx}+Gh=0, \quad \text{ in } (0,L)\times\{t>0\}.$$ Consequently, we can investigate whether the solutions of  this equation tend to zero as $t\rightarrow \infty$ and under what rate they decay. To be precise, the main result of the work, give us an answer to the stabilization problem for the system \eqref{I10}-\eqref{I11}, proposed on the beginning of this paper, and will be state in the following form.
\begin{theorem}
\label{main1} Let $T>0$. Then, for every $R_{0}>0$ there exist
 constants $C>0$ and $k>0$, such that, for any $u_{0}\in L^{2}\left(  0,L\right)  $ with%
\[
\left\Vert u_{0}\right\Vert _{L^{2}\left(  0,L\right)  }\leq R_{0}\text{,}%
\]
the corresponding solution $u$ of (\ref{I10}) satisfies%
\[
\left\Vert u\left(  \cdot,t\right)  \right\Vert _{L^{2}\left(  0,L\right)
}\leq Ce^{-kt}\left\Vert u_{0}\right\Vert _{L^{2}\left(  0,L\right)}, \quad \forall t>0.
\]
\end{theorem}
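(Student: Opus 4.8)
The plan is to reduce the exponential decay to a single \emph{observability inequality} for the damped system, combined with the dissipation of energy and a semigroup iteration. I work throughout in the energy space $L^{2}(0,L)$, where \eqref{I10} is globally well posed: the linear KdV operator with the homogeneous boundary conditions $u(0,t)=u(L,t)=u_{x}(L,t)=0$ generates a strongly continuous semigroup, the damping $G$ is a bounded nonnegative perturbation, and the quadratic term $uu_{x}$ is controlled via the Kato smoothing effect, so that for every $u_{0}\in L^{2}(0,L)$ there is a unique solution $u\in C([0,T];L^{2}(0,L))\cap L^{2}(0,T;H^{1}(0,L))$ depending continuously on the data, together with the hidden trace regularity $u_{x}(0,\cdot)\in L^{2}(0,T)$. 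A direct computation using the boundary conditions and the symmetry and non-negativity of $G$ (so that $\int_{0}^{L}uGu\,dx=\norm{Gu}_{L^{2}(0,L)}^{2}$) yields the dissipation identity
\be
E(T)-E(0)=-\tfrac12\int_{0}^{T}\abs{u_{x}(0,t)}^{2}\,dt-\int_{0}^{T}\norm{Gu(\cdot,t)}_{L^{2}(0,L)}^{2}\,dt,
\qquad E(t)=\tfrac12\norm{u(\cdot,t)}_{L^{2}(0,L)}^{2}.
\ee

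Because the right-hand side is non-positive, $E$ is non-increasing; the point is to quantify the decay. The key step is to prove, for each fixed $R_{0}>0$ and $T>0$, the \emph{observability inequality}: there is $C=C(R_{0},T)>0$ such that every solution issued from $u_{0}$ with $\norm{u_{0}}_{L^{2}(0,L)}\le R_{0}$ satisfies
\be
\norm{u_{0}}_{L^{2}(0,L)}^{2}\le C\int_{0}^{T}\Big(\abs{u_{x}(0,t)}^{2}+\norm{Gu(\cdot,t)}_{L^{2}(0,L)}^{2}\Big)\,dt. \label{obs}
\ee
Granting \eqref{obs}, the dissipation identity gives $E(0)\le C\,(E(0)-E(T))$, hence $E(T)\le\gamma E(0)$ with $\gamma=1-\tfrac1C\in(0,1)$. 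Since energy is non-increasing, the ball $\{\norm{\cdot}_{L^{2}(0,L)}\le R_{0}\}$ is invariant under the flow, and by time-invariance the estimate can be reapplied on $[T,2T],[2T,3T],\dots$ with the \emph{same} constant, yielding $E(nT)\le\gamma^{n}E(0)$ and therefore $\norm{u(\cdot,t)}_{L^{2}(0,L)}\le Ce^{-kt}\norm{u_{0}}_{L^{2}(0,L)}$ with $k=-\tfrac{1}{2T}\ln\gamma>0$, which is exactly the assertion of Theorem~\ref{main1}.

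To prove \eqref{obs} I argue by contradiction and compactness. If \eqref{obs} failed for some $R_{0}$, there would be a sequence of initial data $u_{0}^{n}$ with $\rho_{n}:=\norm{u_{0}^{n}}_{L^{2}(0,L)}\le R_{0}$ whose solutions $u_{n}$ satisfy
\[
\rho_{n}^{-2}\int_{0}^{T}\Big(\abs{\partial_{x}u_{n}(0,t)}^{2}+\norm{Gu_{n}(\cdot,t)}_{L^{2}(0,L)}^{2}\Big)\,dt\ \longrightarrow\ 0.
\]
Extracting a subsequence, $\rho_{n}\to\rho\in[0,R_{0}]$. The a priori bound in $C([0,T];L^{2})\cap L^{2}(0,T;H^{1})$ together with the equation (which controls $\partial_{t}u_{n}$ in a space of negative order) allows an Aubin--Lions / propagation-of-compactness argument to upgrade weak to strong convergence $u_{n}\to u$ in $L^{2}((0,T)\times(0,L))$, which suffices to pass to the limit in the nonlinear term $u_{n}\partial_{x}u_{n}$ and, with the trace regularity, in the observation terms. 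Two regimes arise. If $\rho>0$, the limit $u$ solves the damped KdV equation with $\partial_{x}u(0,t)\equiv0$ and $Gu\equiv0$ on $(0,T)$, and $\norm{u(\cdot,0)}_{L^{2}}=\rho>0$. If $\rho=0$, I rescale $w_{n}:=u_{n}/\rho_{n}$, so that $\norm{w_{n}(\cdot,0)}_{L^{2}}=1$ and $w_{n}$ solves $w_{n,t}+w_{n,x}+\rho_{n}w_{n}w_{n,x}+w_{n,xxx}+Gw_{n}=0$; as $\rho_{n}\to0$ the quadratic term disappears, and the limit $w$ solves the \emph{linear} damped equation with $\partial_{x}w(0,t)\equiv0$, $Gw\equiv0$ and $\norm{w(\cdot,0)}_{L^{2}}=1$.

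In either regime the limit $v$ (equal to $u$ or $w$) satisfies a KdV-type equation $v_{t}+v_{x}+bvv_{x}+v_{xxx}=0$ on $(0,L)\times(0,T)$ (with $b=1$ or $b=0$), together with $Gv\equiv0$ — that is, $v$ is \emph{independent of $x$ on} $\omega\times(0,T)$ — and the boundary conditions $v(0,t)=v(L,t)=\partial_{x}v(L,t)=\partial_{x}v(0,t)=0$. Reading the equation on $\omega$, where $v(x,t)=c(t)$, forces $c'(t)=0$, so $v\equiv c_{0}$ is constant on $\omega\times(0,T)$; subtracting $c_{0}$ and invoking the unique continuation property for KdV (Saut--Scheurer-type results or Carleman estimates, the observation strip having nonempty interior) propagates $v\equiv c_{0}$ to the whole cylinder, and $v(0,t)=0$ gives $c_{0}=0$, hence $v\equiv0$. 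This contradicts $\norm{v(\cdot,0)}_{L^{2}}=\rho>0$ in the first regime and $=1$ in the second, establishing \eqref{obs}. I expect the \emph{unique continuation property} to be the main obstacle: one must show that a solution which is spatially constant on the strip $\omega\times(0,T)$ vanishes identically. The companion difficulty is the compactness step — securing strong convergence of the $u_{n}$ and of the boundary traces $\partial_{x}u_{n}(0,\cdot)$ so that both the nonlinearity and the observation terms pass to the limit — which rests on the Kato smoothing effect and the hidden trace regularity recorded above.
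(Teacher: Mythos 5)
Your proposal is correct and follows essentially the same route as the paper: reduction of the decay estimate to an observability inequality via the energy dissipation law and iteration on successive time intervals, with the observability proved by a compactness--contradiction argument split into the cases $\rho>0$ and $\rho=0$ (rescaling $u_n/\rho_n$ in the latter), and concluded by the unique continuation property for solutions that are spatially constant on $\omega\times(0,T)$. The only cosmetic differences are that you include the boundary trace $\left|u_x(0,t)\right|^2$ alongside $\left\Vert Gu\right\Vert_{L^2}^2$ in the observability inequality (harmless, since the dissipation identity controls both terms, and in fact you never need the trace information in the limit) and that you invoke known UCP results (Saut--Scheurer/Rosier--Zhang type), whereas the paper re-derives the UCP in its appendix from the Carleman estimate of \cite{CaRoPa}.
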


Note that our goal in this work is to give an answer for the stabilization problem that was mentioned at the beginning of this introduction.  Is important to point out that a similar feedback law was used in \cite{Russell1} and, more recently, in \cite{Laurent} for the Korteweg-de Vries equation, to prove a globally uniform exponential result in a periodic domain. In \cite{Laurent,Russell1} the damping with a null mean was introduced to conserve the integral of the solution, which for KdV represents the mass (or volume) of the fluid.

In the context presented in this manuscript, our result improves earlier works on the subject, for example, \cite{Pazoto}. Roughly speaking, differently from what was proposed by \cite{Laurent,Russell1}, in this work,  the weak damping \eqref{I11} is to have a lower cost than the one presented in \cite{Pazoto} in the sense of that we can remove a medium term in the mechanisms proposed in these works and still have positive result of stabilization of the KdV equation.

Observe that the control used in \cite{Pazoto}, is formally the first part of the following forcing term:
$$
Gh\left(  x,t\right)  =1_{\omega}\left(  h\left(  x,t\right)  -\frac
{1}{\left\vert \omega\right\vert }\int_{\omega}h\left(  x,t\right)  dx\right),
$$
where $\omega\subset(0,L)$. In fact, to see this, in \cite{Pazoto}, define $a(x):=-1_{\omega}$ in the above equality and just forget the remaining term. Thus, due to these considerations,  we do not need a strong mechanism acting as control input. Surely, of what was shown in this article, to achieve the stability result for the KdV equation, is that the forcing operator $Gh$ can be taken as a function supported in $\omega$ removing the medium term associated to the first term of the control mechanism.  

Here, it is important point out that, the week damping mechanism is related with respect to the cost of the stabilization, as mentioned previously, which is different in the context of \cite{massarolo}, where the authors proves that the energy of the system dissipates in the $H^{-1}-$norm instead of $L^2-$norm.

Concerning to the stabilization problem, the main ingredient to prove Theorem \ref{main1} is the \textit{Carleman estimate} for the linear problem proved by Capistrano-Filho \textit{et al.} in \cite{CaRoPa}. This estimate together with the energy estimate and compactness arguments reduces the problem to prove the \textit{Unique Continuation Property (UCP)} for the solutions of the nonlinear problem, precisely, the  following result is showed.

\vspace{0.2cm}

\noindent\textbf{UCP:} \textit{Let $L>0$ and $T>0$ be two real numbers, and let $\omega\subset\left(
0,L\right)  $ be a nonempty open set. If $v\in L^{\infty}\left(
0,T;H^{1}\left(  0,L\right)  \right)  $ solves%
\[
\left\{
\begin{array}
[c]{lll}%
v_{t}+v_{x}+v_{xxx}+vv_{x}=0,&  & \text{in }\left(  0,L\right)  \times\left(
0,T\right)  ,\\
v\left(  0,t\right)  =v\left( L,t\right)  =0,&  & \text{in }\left(
0,T\right)  ,\\
v=c, &  & \text{in } \omega\times\left(  0,T\right)  \text{,}%
\end{array}
\right.
\]
for some $c\in\mathbb{R}$. Thus, $v\equiv c$ in $\left(  0,L\right)  \times\left(  0,T\right)  $, where $c\in\mathbb{R}$.}

It is important to point out here that the previous UCP was first proved by Rosier and Zhang in \cite{Rosier3}. In this way, to sake of completeness, we revisited this result now using the Carleman estimate proved by the author in \cite{CaRoPa}.

\subsection{Structure of the work} To end our introduction, we present the outline of the manuscript: In Section \ref{Sec1}, we present some estimates for the KdV equation which will be used in the course of the work.  Section \ref{Sec3} is devoted to present the proof of Theorem \ref{main1}, that is, give the answer to the stabilization problem. Comments of our result as well as some extensions for other models are presented in Section \ref{Sec4}. Finally, on the Appendix \ref{Apendice}, we will give a sketch how to prove the unique continuation property (UCP) presented above.

\section{Well-posedness for KdV equation}\label{Sec1}
In this section, we will review a series of estimates for the KdV equation, namely, 
\begin{equation}
\left\{
\begin{array}
[c]{lll}%
u_{t}+u_{x}+uu_{x}+u_{xxx}=f,&& \text{in } \left(  0,L\right)
\times\left(  0,T\right)\text{,}\\
u\left(  0,t\right)  =u\left(  L,t\right)  =u_{x}\left(  L,t\right)
=0, &&\text{in }\left(  0,T\right)\text{,}\\
u\left(  x,0\right)  =u_{0}\left(  x\right), && \text{in }\left(
0,L\right),
\end{array}
\right.  \label{I10a}%
\end{equation}
which will be borrowed of \cite{Rosier}. Here $f = f(t,x)$ is a function which stands for the control of the system.

\subsection{The linearized KdV equation}

The well-posedness of the problem \eqref{I10a}, with $f\equiv0$, was proved by Rosier \cite{Rosier}. He notice that operator $A=-\dfrac{\partial^{3}%
}{\partial x^{3}}-\dfrac{\partial}{\partial x}$ with domain
\[
D\left(  A\right)  =\left\{  w\in H^{3}\left(  0,L\right)  ;w\left(  0\right)
=w\left(  L\right)  =w_{x}\left(  L\right)  =0\right\}  \subseteq L^{2}\left(
0,L\right)
\]
is the infinitesimal generator of a strongly continuous semigroup of contractions in  $L^{2}\left(  0,L\right)  $.

\begin{theorem}
\label{linkdv}Let $u_{0}\in L^{2}\left(  0,L\right)  $ and $f\equiv0$. There
exists a unique weak solution $u=S\left(  \cdot\right)  u_{0}$ of \eqref{I10a}
such that%
\begin{equation}
u\in C([0,T];L^{2}(0,L))\cap H^{1}(0,T;H^{-2}\left(  0,L\right)  )\text{.}
\label{newh}%
\end{equation}
Moreover, if $u_{0}\in D\left(  A\right)  $, then \eqref{I10a} has a unique
(classical ) solution $u$ such that%
\begin{equation}
u\in C([0,T];D(A))\cap C^{1}(0,T;L^{2}(0,L))\text{.} \label{new_h}%
\end{equation}

\end{theorem}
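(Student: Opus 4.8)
The plan is to combine the abstract $C_0$-semigroup theory already recorded for $A$ with the Kato smoothing (hidden regularity) effect for the KdV dynamics. Since $A$ generates a strongly continuous semigroup of contractions $\{S(t)\}_{t\geq 0}$ on $L^{2}(0,L)$, existence and uniqueness are immediate from Hille--Yosida/Lumer--Phillips theory; the only genuine PDE input required is the gain of one spatial derivative that upgrades the time regularity from $H^{-3}(0,L)$ to $H^{-2}(0,L)$ in \eqref{newh}.

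First, for $u_{0}\in D(A)$ I would invoke the standard regularity of $C_0$-semigroups on the domain of the generator: the map $t\mapsto S(t)u_{0}$ lies in $C([0,T];D(A))\cap C^{1}([0,T];L^{2}(0,L))$ and satisfies $u_{t}=Au$ pointwise in $t$ with values in $L^{2}(0,L)$. Because $AS(t)u_{0}=S(t)Au_{0}$ is continuous into $L^{2}(0,L)$ and the graph norm of $A$ is equivalent, by elliptic regularity together with the boundary conditions defining $D(A)$, to the $H^{3}(0,L)$-norm, the orbit is continuous into $D(A)$; this is exactly \eqref{new_h}, and uniqueness of the classical solution follows from that of the semigroup.

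Next, for $u_{0}\in L^{2}(0,L)$ I would set $u(t):=S(t)u_{0}$ and use the contraction bound $\norm{u(t)}_{L^{2}(0,L)}\leq\norm{u_{0}}_{L^{2}(0,L)}$ together with the strong continuity of $S(\cdot)$ to get $u\in C([0,T];L^{2}(0,L))$, uniqueness again being inherited from the semigroup. To obtain the time regularity in \eqref{newh} I would first establish the smoothing estimate $u\in L^{2}(0,T;H^{1}(0,L))$ with $\norm{u}_{L^{2}(0,T;H^{1}(0,L))}\leq C\norm{u_{0}}_{L^{2}(0,L)}$: for $u_{0}\in D(A)$ multiply the equation by $u$ and by $xu$, integrate over $(0,L)\times(0,T)$, and carefully track the boundary contributions, using the dissipation identity $\frac{d}{dt}\norm{u}^{2}_{L^{2}(0,L)}=-u_{x}(0,t)^{2}$ that follows from the boundary conditions; the bound is then transferred to arbitrary $u_{0}\in L^{2}(0,L)$ by density of $D(A)$ and the contraction estimate.

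Finally, once $u\in L^{2}(0,T;H^{1}(0,L))$ is available I would read the time derivative off the equation: since $u_{t}=-u_{x}-u_{xxx}$ with $u_{x}\in L^{2}(0,T;L^{2}(0,L))\hookrightarrow L^{2}(0,T;H^{-2}(0,L))$ and $u_{xxx}=\partial_{x}^{2}u_{x}\in L^{2}(0,T;H^{-2}(0,L))$, we get $u_{t}\in L^{2}(0,T;H^{-2}(0,L))$, hence $u\in H^{1}(0,T;H^{-2}(0,L))$, which completes \eqref{newh}. The main obstacle is precisely this smoothing step: the semigroup alone gives only $u_{t}=Au\in C([0,T];H^{-3}(0,L))$, so the extra spatial derivative needed for the $H^{-2}(0,L)$ statement must come from the hidden regularity of the KdV flow, and it is there that the delicate multiplier computations and boundary-term bookkeeping are concentrated.
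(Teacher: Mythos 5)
Your proof is correct and follows essentially the same route as the source on which the paper relies: the paper itself gives no proof of Theorem \ref{linkdv}, simply citing Rosier \cite{Rosier}, and your combination of contraction-semigroup theory (for existence, uniqueness, and the classical solution when $u_{0}\in D(A)$) with the multiplier/Kato-smoothing estimate yielding $u\in L^{2}(0,T;H^{1}(0,L))$, from which $u_{t}=-u_{x}-u_{xxx}\in L^{2}(0,T;H^{-2}(0,L))$, is exactly the argument given there. Note that along the way you also re-derive the bound \eqref{new_h1} that the paper records separately as Theorem \ref{linkdv1}, likewise quoted from \cite{Rosier} without proof.
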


An additional regularity result for the weak solutions of the linear system
associated to system \eqref{I10a}  was also established in \cite{Rosier}. The result can be read as follows.

\begin{theorem}
\label{linkdv1}Let $u_{0}\in L^{2}\left(  0,L\right)  $, $Gw\equiv0$ and
$u=S\left(  \cdot\right)  u_{0}$ the weak solution of \eqref{I10a}. Then, $u\in$
$L^{2}(0,T;H^{1}(0,L))$ and there exists a positive constant $c_{0}$ such
that
\begin{equation}
\left\Vert u\right\Vert _{L^{2}(0,T;H^{1}(0,L))}\leq c_{0}\left\Vert
u_{0}\right\Vert _{L^{2}\left(  0,L\right)  }\text{.} \label{new_h1}%
\end{equation}
Moreover, there exist two positive constants $c_{1}$ and $c_{2}$ such that%
\begin{equation}
\left\Vert u_{x}\left(  \cdot,0\right)  \right\Vert _{L^{2}\left(  0,T\right)
}^{2}\leq c_{1}\left\Vert u_{0}\right\Vert _{L^{2}\left(  0,L\right)  }
\label{new_h2}%
\end{equation}
and%
\begin{equation}
\left\Vert u_{0}\right\Vert _{L^{2}\left(  0,L\right)  }\leq\frac{1}%
{T}\left\Vert u\right\Vert _{L^{2}\left(  0,T;L^{2}\left(  0,L\right)
\right)  }^{2}+c_{2}\left\Vert u_{x}\left(  \cdot,0\right)  \right\Vert
_{L^{2}\left(  0,T\right)  }^{2}\text{.} \label{new_h3}%
\end{equation}

\end{theorem}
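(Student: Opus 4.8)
The plan is to establish all three estimates first for regular data $u_{0}\in D(A)$, where Theorem \ref{linkdv} guarantees a classical solution $u\in C([0,T];D(A))\cap C^{1}(0,T;L^{2}(0,L))$ that is smooth enough to justify every integration by parts, and then to pass to the general case $u_{0}\in L^{2}(0,L)$ by density, using that $D(A)$ is dense in $L^{2}(0,L)$ and that each bound below is continuous with respect to $\|u_{0}\|_{L^{2}(0,L)}$. All three inequalities will come from two multiplier identities applied to the linear equation $u_{t}+u_{x}+u_{xxx}=0$ under the boundary conditions $u(0,t)=u(L,t)=u_{x}(L,t)=0$.

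First I would multiply the equation by $u$ and integrate over $(0,L)$. Using the boundary conditions, the transport term $\int_{0}^{L}uu_{x}\,dx=\tfrac12[u^{2}]_{0}^{L}$ vanishes, while two integrations by parts on $\int_{0}^{L}uu_{xxx}\,dx$ leave only the boundary contribution $\tfrac12 u_{x}(0,t)^{2}$; this yields the dissipation identity
\[
\frac{d}{dt}\int_{0}^{L}u^{2}(x,t)\,dx=-u_{x}(0,t)^{2}.
\]
Integrating over $(0,T)$ and discarding the nonnegative term $\|u(\cdot,T)\|_{L^{2}(0,L)}^{2}$ immediately gives the hidden trace regularity \eqref{new_h2}; in particular $u_{x}(\cdot,0)$ is well defined in $L^{2}(0,T)$ and depends continuously on $u_{0}$, which is what makes the later density step legitimate. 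For \eqref{new_h3} I would instead integrate the same identity from $0$ to $t$ to write $\|u_{0}\|_{L^{2}(0,L)}^{2}=\|u(\cdot,t)\|_{L^{2}(0,L)}^{2}+\int_{0}^{t}u_{x}(0,s)^{2}\,ds$, then integrate in $t$ over $(0,T)$; after bounding $\int_{0}^{T}\!\int_{0}^{t}u_{x}(0,s)^{2}\,ds\,dt\le T\|u_{x}(\cdot,0)\|_{L^{2}(0,T)}^{2}$ and dividing by $T$, one obtains \eqref{new_h3}.

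For the smoothing estimate \eqref{new_h1} I would use the multiplier $xu$. Multiplying the equation by $xu$ and integrating over $(0,L)\times(0,T)$, the boundary terms again collapse under $u(0,t)=u(L,t)=u_{x}(L,t)=0$, and the dispersive term produces the gain $\tfrac32\int_{0}^{T}\!\int_{0}^{L}u_{x}^{2}$, leaving the identity
\[
\frac{3}{2}\int_{0}^{T}\!\!\int_{0}^{L}u_{x}^{2}\,dx\,dt
=\frac12\int_{0}^{L}x\,u_{0}^{2}\,dx-\frac12\int_{0}^{L}x\,u^{2}(x,T)\,dx
+\frac12\int_{0}^{T}\!\!\int_{0}^{L}u^{2}\,dx\,dt.
\]
Dropping the nonnegative terminal term, using $0\le x\le L$, and invoking the contraction bound $\|u(\cdot,t)\|_{L^{2}(0,L)}\le\|u_{0}\|_{L^{2}(0,L)}$ (itself a consequence of the dissipation identity, so that $\int_{0}^{T}\|u(\cdot,t)\|_{L^{2}(0,L)}^{2}\,dt\le T\|u_{0}\|_{L^{2}(0,L)}^{2}$), this bounds $\int_{0}^{T}\!\int_{0}^{L}u_{x}^{2}$ by a multiple of $\|u_{0}\|_{L^{2}(0,L)}^{2}$. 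Adding the trivial estimate for $\int_{0}^{T}\|u\|_{L^{2}(0,L)}^{2}$ gives \eqref{new_h1} with an explicit $c_{0}$ depending only on $L$ and $T$.

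The only genuinely delicate point is the passage from $D(A)$ to $L^{2}(0,L)$: the multiplier identities are merely formal for weak solutions, so I would prove them for classical solutions and then take a sequence $u_{0}^{n}\in D(A)$ with $u_{0}^{n}\to u_{0}$ in $L^{2}(0,L)$. By Theorem \ref{linkdv} the associated solutions converge in $C([0,T];L^{2}(0,L))$, the uniform bounds \eqref{new_h1}--\eqref{new_h2} are stable along this limit (yielding weak convergence of $u_{x}^{n}$ in $L^{2}(0,T;L^{2}(0,L))$ and of the traces $u_{x}^{n}(\cdot,0)$ in $L^{2}(0,T)$), and lower semicontinuity of the norms preserves each inequality. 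I expect this limiting argument, rather than any single computation, to be where the care is required, since it is exactly what guarantees that the gained spatial regularity and the boundary trace persist for merely $L^{2}$ initial data.
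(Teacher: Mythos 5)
Your proposal is correct and is essentially the same argument that underlies the paper's statement: the theorem is quoted from Rosier's paper \cite{Rosier} without proof, and your two multipliers $u$ and $xu$, together with the time-integrated energy identity (equivalent to Rosier's multiplier $(T-t)u$, giving $c_{2}=1$ after bounding $\int_{0}^{t}\le\int_{0}^{T}$) and the final density/lower-semicontinuity step from $D(A)$ to $L^{2}(0,L)$, reproduce exactly that classical proof. Note only that your computation yields the dimensionally consistent bounds $\left\Vert u_{x}(\cdot,0)\right\Vert_{L^{2}(0,T)}^{2}\le\left\Vert u_{0}\right\Vert_{L^{2}(0,L)}^{2}$ and $\left\Vert u_{0}\right\Vert_{L^{2}(0,L)}^{2}\le\frac{1}{T}\left\Vert u\right\Vert_{L^{2}(0,T;L^{2}(0,L))}^{2}+\left\Vert u_{x}(\cdot,0)\right\Vert_{L^{2}(0,T)}^{2}$, i.e., the correct squared versions of \eqref{new_h2}--\eqref{new_h3}, whose printed statements carry small misprints (unsquared norms on one side).
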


\subsection{The nonlinear KdV equation}
In this section we prove the well-posedness of the following system%
\begin{equation}
\left\{
\begin{array}
[c]{lll}%
u_{t}+u_{x}+uu_{x}+u_{xxx}=Gw, &&\text{in }\left(  0,L\right)
\times\left(  0,T\right)\text{,}\\
u\left(  0,t\right)  =u\left(  L,t\right)  =u_{x}\left(  L,t\right)
=0,&&\text{in }(0,T),\\
u\left(  x,0\right)  =u^{0}\left(  x\right), &&  \text{in }\left(
0,L\right)  .
\end{array}
\right.  \label{n5a}%
\end{equation}

To solve the
problem we write the solution of \eqref{n5a} as follows $$u=S\left(  t\right)  u_{0}+u_{1}+u_{2},$$ where $\left(
S\left(  t\right)  \right)  _{t\geq0}$ denotes the semigroup associated with
the operator $Au=-u^{\prime\prime\prime}-u^{\prime}$ with domain 
$\mathcal{D}\left(  A\right)$ dense in  $ L^{2}\left(  0,L\right)  $ defined by
\[
\mathcal{D}\left(  A\right)  =\left\{  v\in H^{3}\left(  0,L\right)  ;v\left(
0\right)  =v\left(  L\right)  =v^{\prime}\left(  L\right)  =0\right\}
\text{,}%
\]
and $u_{1}$ and $u_{2}$ are (respectively) solutions of two non-homogeneous
problems%
\begin{equation}
\left\{
\begin{array}
[c]{lll}%
u_{1t}+u_{1x}+u_{1xxx}=Gw,&&\text{in }\omega\times\left(  0,T\right)
\text{,}\\
u_{1}\left(  0,t\right)  =u_{1}\left(  L,t\right)  =u_{1x}\left(  L,t\right)
=0, &&\text{in }(0,T),\\
u_{1}\left(  x,0\right)  =0, &&\text{in }\left(  0,L\right),
\end{array}
\right.  \label{n6a}%
\end{equation}
and%
\begin{equation}
\left\{
\begin{array}
[c]{lll}%
u_{2t}+u_{2x}+u_{2xxx}=f, &&\text{in }\left(  0,L\right)  \times\left(
0,T\right)\text{,}\\
u_{2}\left(  0,t\right)  =u_{2}\left(  L,t\right)  =u_{2x}\left(  L,t\right)
=0,&&\text{in } (0,T)\text{,}\\
u_{2}\left(  x,0\right)  =0, &&\text{in }\left(  0,L\right)  \text{,}%
\end{array}
\right.  \label{n7a}%
\end{equation}
where $f=-u_{2}u_{2x}$ and $w$ is solution of the following adjoint system
\begin{equation}
\left\{
\begin{array}
[c]{lll}%
-w_{t}-w_{x}-w_{xxx}=0, &&\text{in }\left(  0,L\right)  \times\left(
0,T\right)\text{,}\\
w\left(  0,t\right)  =w\left(  L,t\right)  =w_{x}\left(  0,t\right)
=0, &&\text{in }(0,T)\text{,}\\
w\left(  x,T\right)  =0\left(  x\right),&&\text{in } \left(
0,L\right)  .
\end{array}
\right.  \label{n7aa}%
\end{equation}

Let us define the following map
\[
\Psi:w\in L^{2}\left(  0,T;L^{2}\left(  0,L\right)  \right)  \longmapsto
u_{1}\in C\left(  \left[  0,T\right]  ;L^{2}\left(  0,L\right)  \right)  \cap
L^{2}\left(  0,T;H^{1}\left(  0,L\right)  \right)  :=B\text{,}%
\]
endowed with norm%
\[
\left\Vert u_{1}\right\Vert _{B}:=\sup\limits_{t\in\left[  0,T\right]
}\left\Vert u_{1}\left(  \cdot,t\right)  \right\Vert _{L^{2}\left(
0,L\right)  }+\left(  \int_{0}^{T}\left\Vert u_{1}\left(  \cdot,t\right)
\right\Vert _{H^{1}\left(  0,L\right)  }^{2}dt\right)  ^{\frac{1}{2}}\text{,}%
\]
be the map which associates with $w$ the weak solution of (\ref{n6a}). Observe
that, by using Theorem \ref{linkdv1}  the map $u_{0}\in L^{2}\left(  0,L\right)
\mapsto S\left(  \cdot\right)  u^{0}\in B$ is continuous. Furthermore, the following proposition holds true. 

\begin{proposition}
\label{prop4} The function $\Psi$ is a (linear) continuous map.
\end{proposition}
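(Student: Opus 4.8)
The plan is to verify separately the two claims contained in the statement. Linearity of $\Psi$ is immediate: the map $w\mapsto Gw$ is linear, being the composition of multiplication by $1_\omega$ with the subtraction of a mean (both linear operations, cf.\ \eqref{I11}), and the map sending the forcing term $Gw$ to the weak solution $u_1$ of the linear problem \eqref{n6a} with zero initial datum is linear. Hence all the work is in the continuity estimate $\norm{u_1}_B\le C\,\norm{w}_{L^2(0,T;L^2(0,L))}$. \textbf{Step 1 (boundedness of $G$).} For each fixed $t$, $Gw(\cdot,t)$ is (the zero-extension of) the $L^2(\omega)$-orthogonal projection of $w(\cdot,t)|_\omega$ onto the mean-zero functions, so
\[
\norm{Gw(\cdot,t)}_{L^2(0,L)}^2=\int_\omega\abs{w-\tfrac{1}{\abs\omega}\int_\omega w\,dx}^2\,dx\le\int_\omega\abs{w}^2\,dx\le\norm{w(\cdot,t)}_{L^2(0,L)}^2.
\]
Thus $g:=Gw\in L^2(0,T;L^2(0,L))$ with $\norm{g}_{L^2(0,T;L^2)}\le\norm{w}_{L^2(0,T;L^2)}$, and it suffices to bound $\norm{u_1}_B$ by $\norm{g}_{L^2(0,T;L^2)}$.

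\textbf{Step 2 (the $C([0,T];L^2)$-component).} Multiply $u_{1t}+u_{1x}+u_{1xxx}=g$ by $u_1$ and integrate over $(0,L)$. The transport term contributes nothing, while the dispersive term yields the nonnegative boundary term $\tfrac12 u_{1x}(0,t)^2$ once the conditions $u_1(0,t)=u_1(L,t)=u_{1x}(L,t)=0$ are used, giving
\[
\tfrac12\frac{d}{dt}\norm{u_1(\cdot,t)}_{L^2}^2+\tfrac12 u_{1x}(0,t)^2=\int_0^L g\,u_1\,dx\le\norm{g(\cdot,t)}_{L^2}\norm{u_1(\cdot,t)}_{L^2}.
\]
Integrating in time (with $u_1(\cdot,0)=0$) and using Young's inequality, or equivalently representing $u_1(t)=\int_0^t S(t-\tau)g(\tau)\,d\tau$ via the contraction semigroup of Theorem~\ref{linkdv}, one obtains
\[
\sup_{t\in[0,T]}\norm{u_1(\cdot,t)}_{L^2}^2+\int_0^T u_{1x}(0,t)^2\,dt\le C(T)\,\norm{g}_{L^2(0,T;L^2)}^2.
\]

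\textbf{Step 3 (the $L^2(0,T;H^1)$-component, Kato smoothing).} Multiply the equation by the weighted multiplier $x\,u_1$ and integrate over $(0,L)$. The three boundary conditions force every boundary contribution to vanish, and the dispersive term produces exactly $+\tfrac32\int_0^L u_{1x}^2\,dx$, leading to
\[
\tfrac12\frac{d}{dt}\int_0^L x\,u_1^2\,dx+\tfrac32\int_0^L u_{1x}^2\,dx=\tfrac12\int_0^L u_1^2\,dx+\int_0^L x\,u_1\,g\,dx.
\]
Integrating over $(0,T)$, discarding the nonnegative term $\tfrac12\int_0^L x\,u_1(x,T)^2\,dx$, and estimating the right-hand side by Step~2 and Young's inequality gives $\int_0^T\norm{u_{1x}(\cdot,t)}_{L^2}^2\,dt\le C(T)\,\norm{g}_{L^2(0,T;L^2)}^2$. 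Since $u_1(0,t)=u_1(L,t)=0$, Poincaré's inequality upgrades this to control of $\norm{u_1}_{L^2(0,T;H^1)}$, and combining with Step~2 and Step~1 yields $\norm{u_1}_B\le C\,\norm{w}_{L^2(0,T;L^2)}$, which is the continuity claimed.

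Finally, a word on rigor and on the main obstacle. All the integration-by-parts identities above are \emph{a priori} only formal for the weak solutions living in $B$; I would justify them either by first running the computations for smooth data (regularizing $w$ so that $u_1$ takes values in $\mathcal D(A)$) and passing to the limit by density, or by invoking the hidden-regularity bounds \eqref{new_h1}--\eqref{new_h2} of Theorem~\ref{linkdv1} through the Duhamel representation $u_1(t)=\int_0^t S(t-\tau)g(\tau)\,d\tau$. The genuinely substantive point is Step~3: the gain of one spatial derivative (the Kato smoothing effect) is exactly what places $u_1$ in $L^2(0,T;H^1)$ rather than merely in $C([0,T];L^2)$, and the crux is that the weighted multiplier $x\,u_1$ must close with no uncontrolled boundary remainder (which the boundary conditions of \eqref{n6a} guarantee) and with its lower-order remainder absorbed by the already-established Step~2 bound.
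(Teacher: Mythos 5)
Your proof is correct and follows essentially the same route as the paper: the paper also establishes the $C([0,T];L^{2})$ bound and the hidden trace estimate via the multiplier $q u_{1}$ with $q=1$, obtains the $L^{2}(0,T;H^{1}(0,L))$ (Kato smoothing) bound via the multiplier $q=x$, controls $Gw$ by $\left\Vert w\right\Vert _{L^{2}(0,T;L^{2}(0,L))}$, and justifies the formal computations by working first with data in $\mathcal{D}(A)$ and concluding by density. Your write-up is in fact slightly cleaner on two minor points (the orthogonal-projection argument for $\left\Vert Gw\right\Vert _{L^{2}}\leq\left\Vert w\right\Vert _{L^{2}}$, and the explicit absorption of the lower-order term in the weighted identity), but the decomposition and the key estimates coincide with the paper's.
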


\begin{proof} Indeed, let us divide the proof in two parts.

\vspace{0.1cm}

\noindent\textbf{First part.} 

\vspace{0.1cm}

Notice that in \eqref{n6a} $w$ is the solution of \eqref{n7aa}, thus,  $g\left(  x,t\right)  =Gw\left(  x,t\right)  \in
C^{1}\left(  \left[  0,T\right]  ;L^{2}\left(  0,L\right)  \right)  $ and from
classical results concerning such non-homogeneous problems (see \cite{pazy}) we
obtain a unique solution%
\begin{equation}
u_{1}\in C\left(  \left[  0,T\right]  ;\mathcal{D}\left(  A\right)  \right)
\cap C^{1}\left(  \left[  0,T\right]  ;L^{2}\left(  0,L\right)  \right)
\label{n9}%
\end{equation}
of \eqref{n6a}. Additionally, the following estimate can be proved:
\begin{equation}
\int^{T}_{0}\left\|Gu\right\|_{L^{2}(0,L)}dt\leq CT \left\|u\right\|_{Y_{0,T}},\label{tech2.1}
\end{equation}
where, 
$$Y_{0,T}=C([0,T];L^2(0,T))\cap L^{2}([0,T];H^1(0,L)).$$

In fact,  by a direct computation, we have
\begin{align*}
\int^{T}_{0}||Gu||^2_{L^2(0,L)}dt&=\int^T_0\big(\int_{\omega}u^2dx-|\omega|^{-1}\big(\int_{\omega}udx\big)^2\big)^{1/2}dt\\
&\leq\int^T_0\big(\int^L_0u^2dx\big)^{1/2}dt\leq T||u||_{Y_{0,T}}.
\end{align*}
Thus, \eqref{tech2.1} follows.

\vspace{0.1cm}

\noindent\textbf{Second part.} 

\vspace{0.1cm}

Now, we will prove some estimates by multipliers method. Consider
$u_{0}\left(  x\right)  \in\mathcal{D}\left(  A\right)  $. Let $w\in
L^{2}\left(  0,T;L^{2}\left(  0,L\right)  \right)  $ and $q\in C^{\infty
}\left(  \left[  0,T\right]  \times\left[  0,L\right]  \right)  $. Multiplying
(\ref{n6a}) by $qu_{1}$, we obtain%
\begin{equation}
\int_{0}^{S}\int_{0}^{L}qu_{1}\left(  u_{1t}+u_{1x}+u_{1xxx}\right)
dxdt=\int_{0}^{S}\int_0^L qu_{1}\left(  Gw\right)  dxdt\text{,}
\label{n10}%
\end{equation}
where $S\in\left[  0,T\right]  $. Using (\ref{n6a}) (and Fubini's theorem) we
get:%
\begin{equation}%
\begin{array}
[c]{c}%
-%
%TCIMACRO{\dint _{0}^{S}}%
%BeginExpansion
{\displaystyle\int_{0}^{S}}
%EndExpansion%
%TCIMACRO{\dint _{0}^{L}}%
%BeginExpansion
{\displaystyle\int_{0}^{L}}
%EndExpansion
\left(  q_{t}+q_{x}+q_{xxx}\right)  \dfrac{u_{1}^{2}}{2}dxdt+%
%TCIMACRO{\dint _{0}^{L}}%
%BeginExpansion
{\displaystyle\int_{0}^{L}}
%EndExpansion
\left(  \dfrac{qu_{1}^{2}}{2}\right)  \left(  x,S\right)  dx\\{\displaystyle
+\frac{3}{2}}
%TCIMACRO{\dint _{0}^{S}}%
%BeginExpansion
{\displaystyle\int_{0}^{S}}
%EndExpansion%
%TCIMACRO{\dint _{0}^{L}}%
%BeginExpansion
{\displaystyle\int_{0}^{L}}
%EndExpansion
q_{x}u_{1x}^{2}dxdt+{\displaystyle\frac{1}{2}}
%TCIMACRO{\dint _{0}^{S}}%
%BeginExpansion
{\displaystyle\int_{0}^{S}}
%EndExpansion
\left(  qu_{1x}^{2}\right)  \left(  0,t\right)  dt=%
%TCIMACRO{\dint _{0}^{S}}%
%BeginExpansion
{\displaystyle\int_{0}^{S}}
%EndExpansion%
%TCIMACRO{\dint _{\omega}}%
%BeginExpansion
{\displaystyle\int_0^L}
%EndExpansion
\left(  qu_{1}\right)  \left(  Gw\right)  dxdt\text{.}%
\end{array}
\label{n11}%
\end{equation}
Choosing $q=1$ it follows that%
\[%
\begin{array}
[c]{lll}%
%TCIMACRO{\dint _{0}^{L}}%
%BeginExpansion
{\displaystyle\int_{0}^{L}}
%EndExpansion
u_{1}\left(  x,S\right)  ^{2}dx+%
%TCIMACRO{\dint _{0}^{S}}%
%BeginExpansion
{\displaystyle\int_{0}^{S}}
%EndExpansion
u_{1x}\left(  0,t\right)  ^{2}dt & = &
%TCIMACRO{\dint _{0}^{S}}%
%BeginExpansion
{\displaystyle\int_{0}^{S}}
%EndExpansion%
%TCIMACRO{\dint _{\omega}}%
%BeginExpansion
{\displaystyle\int_{0}^L}
%EndExpansion
u_{1}\left(  Gw\right)  dxdt\\
& \leq & \dfrac{1}{2}\left\Vert u\right\Vert _{L^{2}\left(  0,S;L^{2}\left(
0,L\right)  \right)  }+\dfrac{1}{2}\left\Vert Gw\right\Vert _{L^{2}\left(
0,S;L^{2}\left( 0,L\right)  \right)  }^{2}\text{.}%
\end{array}
\]
Then, we get%
\begin{equation}
\left\Vert u_{1}\right\Vert _{C\left(  \left[  0,T\right]  ;L^{2}\left(
0,L\right)  \right)  }\leq C\left\Vert Gw\right\Vert _{L^{2}\left(
0,T;L^{2}\left(  0,L\right)  \right), } \label{n12}%
\end{equation}
which yields
\begin{equation}
\left\Vert u_{1}\right\Vert _{L^{2}\left(  \left(  0,T\right)  \times\left(
0,L\right)  \right)  }\leq C\left\Vert Gw\right\Vert _{L^{2}\left(
0,T;L^{2}\left(  0,L\right)  \right)  } \label{n121}%
\end{equation}
and%
\begin{equation}
\left\Vert u_{1x}\left(  0,\cdot\right)  \right\Vert _{L^{2}\left(
0,T\right)  }\leq C\left\Vert Gw\right\Vert _{L^{2}\left(  0,T;L^{2}\left(
0,L\right)  \right)  }\text{.} \label{n13}%
\end{equation}
Now take $q\left(  x,t\right)  =x$ and $S=T$, (\ref{n11}) gives,%
\begin{equation}
-\int_{0}^{T}\int_{0}^{L}\frac{u_{1}^{2}}{2}dxdt+\int_{0}^{L}\frac{x}{2}%
u_{1}^{2}\left(  x,T\right)  dx+\frac{3}{2}\int_{0}^{T}\int_{0}^{L}u_{1x}%
^{2}dxdt=\int_{0}^{T}\int_{0}^Lxu_{1}\left(  Gw\right)  dxdt\text{.}
\label{n14}%
\end{equation}
Hence%
\[
\int_{0}^{T}\int_{0}^{L}u_{1x}^{2}dxdt\leq\frac{1}{3}\left(  \int_{0}^{T}%
\int_{0}^{L}u_{1}^{2}dxdt+L\left\{  \int_{0}^{T}\int_{0}^{L}u^{2}dxdt+\int
_{0}^{T}\int_{0}^L\left(  Gw\right)  ^{2}dxdt\right\}  \right)
\]
and then, using (\ref{n121}),
\begin{equation}
\left\Vert u_{1}\right\Vert _{L^{2}\left(  0,T;H^{1}\left(  0,L\right)
\right)  }\leq C\left(  T,L\right)  \left\Vert Gw\right\Vert _{L^{2}\left(
0,T;L^{2}\left(  0,L\right)  \right)  }\text{.} \label{n15}%
\end{equation}
Using (\ref{n12}), (\ref{n15}), \eqref{tech2.1} and the density of $\mathcal{D}\left(
A\right)  $ in $L^{2}\left(  0,L\right)  $, we deduce that $\Psi$ is a
linear continuous map, proving thus the proposition.
\end{proof}

The next result, proved in \cite[Proposition 4.1]{Rosier}, give us that nonlinear system \eqref{n7a} is well-posed.

\begin{proposition}\label{prop5}The following items can be proved.
\begin{itemize} 
\item[1.]  If $u\in L^{2}\left(  0,T;H^{1}\left(  0,L\right)  \right)  $,
$uu_{x}\in L^{1}\left(  0,T;L^{2}\left(  0,L\right)  \right)  $ and $u\mapsto
uu_{x}$ is continuous.
\item[2.] For $f\in L^{1}\left(  0,T;L^{2}\left(  0,L\right)  \right)  $ the mild
solution $u_{2}$ of \eqref{n7a} belongs to $B$. Moreover, the linear map%
\[
\Theta:f\longmapsto u_{2}%
\]
is continuous.
\end{itemize}
\end{proposition}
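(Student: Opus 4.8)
The plan is to establish the two assertions separately: the first is a direct consequence of the one-dimensional Sobolev embedding, while the second reduces to the multiplier estimates already carried out for the linear problem in the proof of Proposition~\ref{prop4}.

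For item~1, I would use the continuous embedding $H^{1}(0,L)\hookrightarrow L^{\infty}(0,L)$, valid in dimension one, to estimate, for a.e. $t\in(0,T)$,
\[
\|uu_{x}(\cdot,t)\|_{L^{2}(0,L)}\le\|u(\cdot,t)\|_{L^{\infty}(0,L)}\|u_{x}(\cdot,t)\|_{L^{2}(0,L)}\le C\|u(\cdot,t)\|_{H^{1}(0,L)}^{2}.
\]
Integrating in time and using $u\in L^{2}(0,T;H^{1}(0,L))$ gives
\[
\int_{0}^{T}\|uu_{x}(\cdot,t)\|_{L^{2}(0,L)}\,dt\le C\|u\|_{L^{2}(0,T;H^{1}(0,L))}^{2}<\infty,
\]
so that $uu_{x}\in L^{1}(0,T;L^{2}(0,L))$. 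For the continuity of $u\mapsto uu_{x}$ I would exploit the bilinear identity $uu_{x}-vv_{x}=u(u-v)_{x}+(u-v)v_{x}$, which together with the same embedding yields
\[
\|uu_{x}-vv_{x}\|_{L^{2}(0,L)}\le C\|u-v\|_{H^{1}(0,L)}\bigl(\|u\|_{H^{1}(0,L)}+\|v\|_{H^{1}(0,L)}\bigr);
\]
Cauchy--Schwarz in $t$ then produces a local Lipschitz bound from $L^{2}(0,T;H^{1}(0,L))$ into $L^{1}(0,T;L^{2}(0,L))$, hence continuity.

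For item~2, I would represent the mild solution of \eqref{n7a} by Duhamel's formula $u_{2}(t)=\int_{0}^{t}S(t-s)f(s)\,ds$. Since $(S(t))_{t\ge0}$ is a contraction semigroup on $L^{2}(0,L)$, the bound $\|u_{2}(t)\|_{L^{2}(0,L)}\le\int_{0}^{t}\|f(s)\|_{L^{2}(0,L)}\,ds\le\|f\|_{L^{1}(0,T;L^{2}(0,L))}$ is immediate, and the continuity of $t\mapsto u_{2}(t)$ in $L^{2}(0,L)$ is the standard regularity of mild solutions (see \cite{pazy}). The substantive part is the gain of one spatial derivative, namely $\|u_{2}\|_{L^{2}(0,T;H^{1}(0,L))}\le C\|f\|_{L^{1}(0,T;L^{2}(0,L))}$, which I would obtain by repeating the multiplier computation of Proposition~\ref{prop4}: multiplying the equation by $u_{2}$ and integrating over $(0,L)\times(0,S)$ produces the energy identity together with the trace term $u_{2x}^{2}(0,t)$, while the choice of multiplier $q(x,t)=x$ (for which the boundary contribution at $x=0$ vanishes) yields the interior control of $\int_{0}^{T}\int_{0}^{L}u_{2x}^{2}\,dx\,dt$.

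The main obstacle is exactly this smoothing estimate under the weak hypothesis $f\in L^{1}(0,T;L^{2}(0,L))$. Because $f$ is only integrable, and not square-integrable, in time, the forcing term $\int_{0}^{S}\int_{0}^{L}u_{2}f\,dx\,dt$ cannot be absorbed by a plain Cauchy--Schwarz inequality in $t$; instead one bounds it by $\bigl(\sup_{[0,T]}\|u_{2}(\cdot,t)\|_{L^{2}(0,L)}\bigr)\|f\|_{L^{1}(0,T;L^{2}(0,L))}$ and takes the supremum over $S$ on the left-hand side, which closes the estimate and simultaneously furnishes the $C([0,T];L^{2})$ bound and the trace bound for $u_{2x}(0,\cdot)$. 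Once $\|u_{2}\|_{B}\le C\|f\|_{L^{1}(0,T;L^{2}(0,L))}$ is established, linearity of $\Theta$ upgrades this bound to continuity, completing the proof.
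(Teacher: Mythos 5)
Your proposal is correct, but note that the paper does not actually prove Proposition~\ref{prop5}: it simply cites \cite[Proposition 4.1]{Rosier}, so your argument is a reconstruction of the cited proof rather than a parallel to anything in the text. Your reconstruction matches the standard (Rosier's) argument: item~1 via the one-dimensional embedding $H^{1}(0,L)\hookrightarrow L^{\infty}(0,L)$ together with the bilinear splitting $uu_{x}-vv_{x}=u(u-v)_{x}+(u-v)v_{x}$, and item~2 via Duhamel plus the multipliers $q=1$ and $q(x,t)=x$, with the key observation that the forcing term must be estimated by $\bigl(\sup_{t}\|u_{2}(\cdot,t)\|_{L^{2}(0,L)}\bigr)\|f\|_{L^{1}(0,T;L^{2}(0,L))}$ rather than by Cauchy--Schwarz in time; this does close the estimate exactly as you describe. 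The one step you should make explicit is the justification of the integration by parts: for $f$ merely in $L^{1}(0,T;L^{2}(0,L))$ the mild solution \eqref{n8} is not regular enough to multiply the equation and integrate directly, so the identities must first be derived for smooth data (say $f\in C^{1}([0,T];L^{2}(0,L))$ or $f$ valued in $\mathcal{D}(A)$) and then extended to general $f$ by density and the linearity of $\Theta$ --- the same device the paper uses at the end of the proof of Proposition~\ref{prop4}, where the computation is done for $u_{0}\in\mathcal{D}(A)$ and extended by density. With that caveat added, your proof is complete.
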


\begin{remark}
\label{rmk2}Recall that for $f\in L^{1}\left(  0,T;L^{2}\left(  0,L\right)
\right)  $ the mild solution $u_{2}$ of \eqref{n7a} is given by%
\begin{equation}
u_{2}\left(  \cdot,t\right)  =\int_{0}^{t}S\left(  t-s\right)  f\left(
\cdot,s\right)  ds\text{.} \label{n8}%
\end{equation}
\end{remark}

\section{Stabilization of KdV equation\label{Sec3}}

In this section we study the stabilization of the system%
\begin{equation}
\left\{
\begin{array}
[c]{lll}%
u_{t}+u_{x}+uu_{x}+u_{xxx}+Gu=0,&&\text{in }\left(  0,L\right)
\times\{t>0\}\text{,}\\
u\left(  0,t\right)  =u\left(  L,t\right)  =u_{x}\left(  L,t\right)
=0, && t>0,\\
u\left(  x,0\right)  =u_{0}\left(  x\right), &&  \text{in }\left(
0,L\right).
\end{array}
\right.  \label{m1}%
\end{equation}
Here, $Gu$ is defined by \eqref{h2}.  Precisely, the issue in this section is the following one:

\vspace{0.2cm}

\noindent\textbf{Stabilization problem:} \textit{Can one find a feedback control law
$h$ so that the resulting closed-loop system \eqref{m1} is asymptotically stable when $t\rightarrow\infty$?}

\vspace{0.2cm}

The answer to the stability problem is given by the theorem below.

\begin{theorem}
\label{stabilization} Let $T>0$. Then, there exist constants $k>0$, $R_{0}>0$ and $C>0$, such that for any $u_{0}\in
L^{2}\left(  0,L\right)  $ with%
\[
\left\Vert u_{0}\right\Vert _{L^{2}\left(  0,L\right)  }\leq R_{0}\text{,}%
\]
the corresponding solution $u$ of (\ref{m1}) satisfies%
\begin{equation}
\left\Vert u\left(  \cdot,t\right)  \right\Vert _{L^{2}\left(  0,L\right)
}\leq Ce^{-kt}\left\Vert u_{0}\right\Vert _{L^{2}\left(  0,L\right)  }\text{,
}\forall t\geq0\text{.} \label{m3}%
\end{equation}
\end{theorem}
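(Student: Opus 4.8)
The plan is to deduce the global estimate \eqref{m3} from two ingredients: the dissipation of the $L^2$-energy along \eqref{m1}, and an \emph{observability inequality} over a fixed time window $[0,T]$. First I would multiply the equation in \eqref{m1} by $u$ and integrate over $(0,L)$. Using $u(0,t)=u(L,t)=u_x(L,t)=0$, the transport term $\int_0^L uu_x\,dx$ and the nonlinear term $\int_0^L u^2u_x\,dx$ vanish, while integrating the dispersive term by parts twice produces the boundary contribution $\tfrac12 u_x^2(0,t)$. A direct computation with \eqref{h2} gives $\int_0^L u\,Gu\,dx=\|Gu\|_{L^2(0,L)}^2\ge 0$, so that
\[
\frac{d}{dt}\int_0^L u^2(x,t)\,dx=-u_x^2(0,t)-2\|Gu\|_{L^2(0,L)}^2\le 0 .
\]
In particular $E(t):=\|u(\cdot,t)\|_{L^2(0,L)}^2$ is nonincreasing (which, with the well-posedness of Section~\ref{Sec1}, also yields global solutions for data of size $R_0$), and integrating on $[0,T]$ gives $E(0)-E(T)=\int_0^T u_x^2(0,t)\,dt+2\int_0^T\|Gu\|_{L^2(0,L)}^2\,dt$.

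Next I would reduce the theorem to the following estimate: there are $T>0$ and $C=C(R_0,T)>0$ such that every solution of \eqref{m1} with $\|u_0\|_{L^2(0,L)}\le R_0$ satisfies
\[
\|u_0\|_{L^2(0,L)}^2\le C\left(\int_0^T u_x^2(0,t)\,dt+\int_0^T\|Gu\|_{L^2(0,L)}^2\,dt\right).
\]
Combined with the energy identity, this gives $E(T)\le(1-C^{-1})E(0)$ with $\gamma:=1-C^{-1}\in[0,1)$. Since \eqref{m1} is autonomous, the same bound applies on each interval $[nT,(n+1)T]$, so $E(nT)\le\gamma^{n}E(0)$; interpolating with the monotonicity of $E$ on the remaining subintervals yields \eqref{m3} with $k=-\tfrac{1}{2T}\log\gamma$ and a suitable $C$.

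The crux is the observability inequality, which I would prove by contradiction and compactness. If it fails for every $C$, there is a sequence of solutions $u^n$ with $\lambda_n:=\|u_0^n\|_{L^2(0,L)}\le R_0$ such that $\int_0^T (u^n_x)^2(0,t)\,dt+\int_0^T\|Gu^n\|_{L^2(0,L)}^2\,dt\to 0$ while $\lambda_n^2$ dominates this quantity. Using the estimates of Section~\ref{Sec1} (the gain of regularity \eqref{new_h1}, the trace inequality \eqref{new_h3}, and the continuity of $\Psi$ and $\Theta$ in Propositions~\ref{prop4} and \ref{prop5}) I would obtain uniform bounds in $C([0,T];L^2(0,L))\cap L^2(0,T;H^1(0,L))$ and, by Aubin--Lions compactness, extract a strong limit $v$. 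The energy identity forces $v$ to be \emph{nontrivial}, and the vanishing of the observed quantities gives $v_x(0,t)\equiv 0$ and $Gv\equiv 0$ in the limit; the latter means $v$ is spatially constant on $\omega$, i.e. $v=c$ on $\omega\times(0,T)$. When $\lambda_n\to\lambda>0$ the limit solves the full nonlinear equation, and when $\lambda_n\to 0$ one rescales $u^n/\lambda_n$ (so the nonlinearity drops out) and the limit solves the \emph{linear} equation, in which case the Carleman estimate of \cite{CaRoPa} directly yields the linear observability and hence $v\equiv 0$.

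The decisive obstacle is then the unique continuation step in the nonlinear case: propagating $v\equiv c$ from $\omega\times(0,T)$ to all of $(0,L)\times(0,T)$. This is exactly the \textbf{UCP} recalled in the introduction, whose proof via the Carleman weight I would carry out as in Appendix~\ref{Apendice}. Once $v\equiv c$ on $(0,L)\times(0,T)$, the boundary conditions $v(0,t)=v(L,t)=0$ force $c=0$, hence $v\equiv 0$, contradicting the nontriviality of the limit. Two points demand care throughout: making every bound uniform over the ball $\|u_0\|_{L^2(0,L)}\le R_0$, so that $C$ and $k$ depend only on $R_0$ and $T$; and passing to the limit in the nonlinear term $v^n v^n_x$, where the $L^2(0,T;H^1)$ smoothing together with the continuity statement of Proposition~\ref{prop5} is precisely what guarantees that the weak limit is a genuine solution.
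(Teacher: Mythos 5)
Your proposal is correct and follows essentially the same route as the paper: the energy identity reduces Theorem \ref{stabilization} to an observability inequality over $[0,T]$, which is proved by contradiction and compactness, splitting into the cases $\alpha>0$ (nonlinear limit, handled by the UCP of Appendix \ref{Apendice}) and $\alpha=0$ (rescaling $u_n/\alpha_n$, linear limit), followed by iteration over successive time windows. The only cosmetic deviations are that your observability inequality also carries the trace term $\int_0^T u_x^2(0,t)\,dt$ on the right-hand side (a weaker but still sufficient statement, since the paper's inequality \eqref{m4} observes only $\int_0^T\|Gu\|_{L^2(0,L)}^2\,dt$), and that you close the linear case with the Carleman estimate of \cite{CaRoPa} where the paper invokes Holmgren's theorem.
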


As usual in the stabilization problem, Theorem \ref{stabilization} is a direct consequence of the following
\textit{observability inequality}.

\begin{proposition}\label{OI}
Let $T>0$ and $R_{0}>0$ be given. There exists a constant $C>1$, such that,
for any $u_{0}\in L^{2}\left(  0,L\right)  $ satisfying%
\[
\left\Vert u_{0}\right\Vert _{L^{2}\left(  0,L\right)  }\leq R_{0}\text{,}%
\]
the corresponding solution $u$ of (\ref{m1}) satisfies%
\begin{equation}
\left\Vert u_{0}\right\Vert _{L^{2}\left(  0,L\right)  }^{2}\leq C\int_{0}%
^{T}\left\Vert Gu\right\Vert _{L^{2}\left(  0,L\right)  }^{2}dt\text{.}
\label{m4}%
\end{equation}

\end{proposition}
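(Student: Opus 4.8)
The plan is to argue by contradiction using a compactness–uniqueness scheme, which is the standard route for this kind of observability estimate and the one signalled by the paper's own heuristic (Carleman estimate $+$ energy estimates $+$ compactness, reducing to the UCP). Suppose \eqref{m4} fails. Then for each $n\in\mathbb{N}$ there is an initial datum $u_{0}^{n}\in L^{2}(0,L)$ with $\|u_{0}^{n}\|_{L^{2}(0,L)}\le R_{0}$ whose corresponding solution $u^{n}$ of \eqref{m1} satisfies
\begin{equation*}
\|u_{0}^{n}\|_{L^{2}(0,L)}^{2} > n\int_{0}^{T}\|Gu^{n}\|_{L^{2}(0,L)}^{2}\,dt .
\end{equation*}
Set $\lambda_{n}:=\|u_{0}^{n}\|_{L^{2}(0,L)}$; after passing to a subsequence $\lambda_{n}\to\lambda\in[0,R_{0}]$. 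The above forces $\int_{0}^{T}\|Gu^{n}\|_{L^{2}(0,L)}^{2}\,dt\to 0$. By the well-posedness theory of Section \ref{Sec1} (Theorems \ref{linkdv}, \ref{linkdv1} together with Propositions \ref{prop4} and \ref{prop5}), the sequence $(u^{n})$ is bounded in $B=C([0,T];L^{2}(0,L))\cap L^{2}(0,T;H^{1}(0,L))$, and from the equation $(u^{n}_{t})$ is bounded in a negative-order space; hence by Aubin–Lions $(u^{n})$ is relatively compact in $L^{2}(0,T;L^{2}(0,L))$ and we may extract $u^{n}\to u$ strongly there.

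I would now split into the two standard cases. In the case $\lambda>0$, the limit $u$ is a nontrivial solution of \eqref{m1} with $Gu\equiv 0$ on $(0,L)\times(0,T)$. Unpacking the definition \eqref{h2}, $Gu\equiv 0$ means precisely that $u(x,t)=\frac{1}{|\omega|}\int_{\omega}u(y,t)\,dy=:c(t)$ for a.e.\ $x\in\omega$; that is, $u$ is spatially constant on $\omega$ for each $t$. At this point the UCP stated in the introduction applies (after checking $u\in L^{\infty}(0,T;H^{1}(0,L))$, which follows from the gained regularity): since $u$ solves the KdV equation, vanishes at the endpoints, and equals a constant on $\omega\times(0,T)$, the UCP forces $u\equiv c$ on all of $(0,L)\times(0,T)$; the boundary conditions $u(0,t)=u(L,t)=0$ then give $c=0$, so $u\equiv 0$. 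But $\|u(\cdot,0)\|_{L^{2}}=\lim_{n}\lambda_{n}=\lambda>0$ by strong convergence, a contradiction. In the case $\lambda=0$, one rescales: put $v^{n}:=u^{n}/\lambda_{n}$, so that $\|v^{n}(\cdot,0)\|_{L^{2}}=1$ while $\int_{0}^{T}\|Gv^{n}\|^{2}_{L^{2}}\,dt\to 0$, and the nonlinear term $u^{n}u^{n}_{x}=\lambda_{n}v^{n}v^{n}_{x}$ carries an extra factor $\lambda_{n}\to 0$. Passing to the limit, $v^{n}\to v$ solves the \emph{linear} closed-loop system $v_{t}+v_{x}+v_{xxx}+Gv=0$ with $Gv\equiv 0$, so $v$ solves the linear KdV with $v$ constant on $\omega$; the linear UCP (a special case of the stated UCP, or directly the Carleman estimate from \cite{CaRoPa}) yields $v\equiv 0$, again contradicting $\|v(\cdot,0)\|_{L^{2}}=1$.

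The engine making the $\lambda=0$ case quantitative — and in fact the cleanest way to present the whole argument — is the \emph{Carleman estimate} for the linear problem from \cite{CaRoPa}, which directly yields a linear observability inequality
\begin{equation*}
\|v_{0}\|_{L^{2}(0,L)}^{2}\le C\int_{0}^{T}\|Gv\|_{L^{2}(0,L)}^{2}\,dt
\end{equation*}
for solutions of the linearized system; combined with the energy identity
\[
\frac{d}{dt}\|u\|_{L^{2}(0,L)}^{2}=-\|Gu\|_{L^{2}(0,L)}^{2}-u_{x}(0,t)^{2}\le 0,
\]
this handles the small-data/linearized regime, while the compactness argument above absorbs the nonlinearity for data of size up to $R_{0}$. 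The main obstacle is the nonlinear case $\lambda>0$: one must genuinely invoke the UCP, since the Carleman estimate alone controls only the linear dynamics, and one must verify that the strong limit $u$ is a bona fide $H^{1}$-solution to which the UCP hypotheses apply (in particular the regularity $u\in L^{\infty}(0,T;H^{1}(0,L))$ and the passage of the condition $Gu\equiv 0$ to the pointwise statement $u=c$ on $\omega$). The remaining steps — the uniform bounds, the Aubin–Lions compactness, and the continuity of the solution map under strong $L^{2}$-convergence needed to pass to the limit in $u^{n}u^{n}_{x}$ — are routine given Section \ref{Sec1}.
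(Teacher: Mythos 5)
Your overall strategy --- contradiction, normalization $\lambda_{n}=\|u_{0}^{n}\|_{L^{2}}\to\lambda$, the dichotomy $\lambda>0$ versus $\lambda=0$, Aubin--Lions compactness, reduction to the UCP in the nonlinear case and to a linear uniqueness result (Holmgren or Carleman) in the rescaled case --- is exactly the paper's compactness--uniqueness scheme. But there is a genuine gap at the decisive step of both cases. You conclude the case $\lambda>0$ by asserting that $\|u(\cdot,0)\|_{L^{2}}=\lim_{n}\lambda_{n}=\lambda$ ``by strong convergence''. The strong convergence you have is only in $L^{2}(0,T;L^{2}(0,L))$, which gives no control whatsoever on the time slice $t=0$. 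What compactness actually yields (boundedness in $L^{\infty}(0,T;L^{2})$ plus equicontinuity in $H^{-2}$) is at best $u^{n}(0)\rightharpoonup u(0)$ weakly in $L^{2}(0,L)$, and weak lower semicontinuity goes the wrong way: it gives $\|u(0)\|_{L^{2}}\leq\liminf_{n}\lambda_{n}$, so the conclusion $u\equiv 0$ is perfectly compatible with $\lambda>0$. This is not a technicality: the system is dissipative, solutions can lose $L^{2}$ mass immediately, and the whole content of the observability inequality is to rule out energy disappearing without being registered by $Gu$. The paper closes this step with an extra argument that you omit: since $u_{n}\to 0$ in $L^{2}(0,T;L^{2}(0,L))$, one may pick $t_{0}\in[0,T]$ such that (along a subsequence) $u_{n}(t_{0})\to 0$ strongly in $L^{2}(0,L)$, and then propagate smallness \emph{backward} from $t_{0}$ to $0$ via the energy estimate
\[
\left\Vert u_{n}(0)\right\Vert_{L^{2}(0,L)}^{2}\leq\left\Vert u_{n}(t_{0})\right\Vert_{L^{2}(0,L)}^{2}+\int_{0}^{t_{0}}\left\Vert Gu_{n}\right\Vert_{L^{2}(0,L)}^{2}dt,
\]
whose right-hand side vanishes as $n\to\infty$ thanks to \eqref{m6}; this forces $\lambda_{n}\to 0$ and yields the contradiction. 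The identical repair is needed in your case $\lambda=0$, where ``again contradicting $\|v(\cdot,0)\|_{L^{2}}=1$'' suffers from the same flaw. Without this backward-propagation step, neither case actually reaches a contradiction.

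A second, smaller gap: from $Gu\equiv 0$ you correctly get $u(x,t)=c(t)$ on $\omega$, i.e.\ constancy in $x$ only, but you then invoke the UCP for ``$u$ equals a constant on $\omega\times(0,T)$''. The UCP as stated requires $c\in\mathbb{R}$, constant in both variables. The missing one-line step, which the paper carries out explicitly, is that on $\omega$ all spatial derivatives of $u$ vanish ($u_{x}=u_{xxx}=uu_{x}=0$), so the equation itself gives $u_{t}=c^{\prime}(t)=0$ there; only then is $c$ a true constant and the UCP applicable. The same remark applies in the rescaled linear case before invoking Holmgren or the Carleman estimate. Finally, note that the regularity $u\in L^{\infty}(0,T;H^{1}(0,L))$ demanded by the UCP is asserted by you as ``gained regularity'' without proof; the paper is equally terse on this point, so this is not a deviation from it, but it is a hypothesis that deserves verification rather than a parenthesis.
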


Indeed, if (\ref{m4}) holds, then it follows from the energy estimate that
\begin{equation}
\left\Vert u\left(  \cdot,T\right)  \right\Vert _{L^{2}\left(  0,L\right)
}^{2}\leq\left\Vert u_{0}\right\Vert _{L^{2}\left(  0,L\right)  }^{2}-\int
_{0}^{T}\left\Vert Gu\right\Vert _{L^{2}\left(  0,L\right)  }^{2}dt\text{,}
\label{m5}%
\end{equation}
or, more precisely,
\[
\left\Vert u\left(  \cdot,T\right)  \right\Vert _{L^{2}\left(  0,L\right)
}^{2}\leq\left(  1-C^{-1}\right)  \left\Vert u_{0}\right\Vert _{L^{2}\left(
0,L\right)  }^{2}\text{.}%
\]
Thus,%
\[
\left\Vert u\left(  \cdot,mT\right)  \right\Vert _{L^{2}\left(  0,L\right)
}^{2}\leq\left(  1-C^{-1}\right)  ^{m}\left\Vert u_{0}\right\Vert
_{L^{2}\left(  0,L\right)  }^{2}%
\]
which gives (\ref{m3}) by the semigroup property. In (\ref{m3}), we obtain a
constant $k$ independent of $R_{0}$ by noticing that for $t>c\left(
\left\Vert u_{0}\right\Vert _{L^{2}\left(  0,L\right)  }\right)  $, the
$L^{2}-$ norm of $u\left(  \cdot,t\right)  $ is smaller than $1$, so that we
can take the $k$ corresponding to $R_{0}=1$.

\begin{proof}[Proof of Proposition \ref{OI}]We prove (\ref{m4}) by contradiction. Suppose that \eqref{m4} does not occurs. Thus, for any $n\geq1$, (\ref{m1}) admits a solution $u_{n}\in
C\left(  \left[  0,T\right]  ;L^{2}\left(  0,L\right)  \right)  \cap
L^{2}\left(  0,T;H^{1}\left(  0,L\right)  \right)  $ satisfying%
\[
\left\Vert u_{n}\left(  0\right)  \right\Vert _{L^{2}\left(  0,L\right)  }\leq
R_{0}\text{,}%
\]
and%
\begin{equation}
\int_{0}^{T}\left\Vert Gu_{n}\right\Vert _{L^{2}\left(  0,L\right)  }%
^{2}dt\leq\frac{1}{n}\left\Vert u_{0,n}\right\Vert _{L^{2}\left(  0,L\right)
}^{2}\text{,} \label{m6}%
\end{equation}
where $u_{0,n}=u_{n}\left(  0\right)  $. Since $\alpha_{n}:=\left\Vert
u_{0,n}\right\Vert _{L^{2}\left(  0,L\right)  }\leq R_{0}$, one can choose a
subsequence of $\left\{  \alpha_{n}\right\}  $, still denoted by $\left\{
\alpha_{n}\right\}  $, such that%
\[
\lim\limits_{n\rightarrow\infty}\alpha_{n}=\alpha\text{.}%
\]
There are two possible cases: $i.$ $\alpha>0$ and $ii.$ $\alpha=0$.

\begin{itemize}
\item[i.] $\alpha>0$.
\end{itemize}

Note that the sequence $\left\{  u_{n}\right\}  $ is bounded in
$L^{\infty}\left(  0,T;L^{2}\left(  0,L\right)  \right)  \cap L^{2}\left(
0,T;H^{1}\left(  0,L\right)  \right)  $. On the other hand,%
\[
u_{n},_{t}=-\left(  u_{n,x}+\frac{1}{2}\partial_{x}\left(  u_{n}^{2}\right)
+u_{n,xxx}-Gu_{n}\right)  ,
\]
is bounded in $L^{2}\left(  0,T;H^{-2}\left(  0,L\right)  \right)  $. \ As the
first immersion of%
\[
H^{1}\left(  0,L\right)  \hookrightarrow L^{2}\left(  0,L\right)
\hookrightarrow H^{-2}\left(  0,L\right)  \text{,}%
\]
is compact, exists a subsequence, still denoted by $\left\{  u_{n}\right\}  $,
such that%
\begin{equation}%
\begin{array}
[c]{lll}%
u_{n}\longrightarrow u\text{ in }L^{2}\left(  0,T;L^{2}\left(  0,L\right)
\right)  \text{,}\\
-\frac{1}{2}\partial_{x}\left(  u_{n}^{2}\right) \rightharpoonup-\frac{1}%
{2}\partial_{x}\left(  u^{2}\right)\text{ in }L^{2}\left(  0,T;H^{-1}\left(
0,L\right)  \right)  \text{.}%
\end{array}
\label{m7}%
\end{equation}
It follows from (\ref{m6}) and (\ref{m7}) that%
\begin{equation}
\int_{0}^{T}\left\Vert Gu_{n}\right\Vert _{L^{2}\left(  0,L\right)  }%
^{2}dt\overset{n\rightarrow\infty}{\longrightarrow}\int_{0}^{T}\left\Vert
Gu\right\Vert _{L^{2}\left(  0,L\right)  }^{2}=0,\label{m10}%
\end{equation}
which implies that
\[
Gu=0,
\]
i.e.,
\[
u\left(  x,t\right)  -\frac{1}{\left\vert \omega\right\vert }\int_{\omega
}u\left(  x,t\right)  dx=0\Rightarrow u\left(  x,t\right)  =\frac
{1}{\left\vert \omega\right\vert }\int_{\omega}u\left(  x,t\right)  dx.
\]
Consequently,%
\[
u\left(  x,t\right)  =c\left(  t\right)  \text{ in }\omega\times\left(
0,T\right)  \text{,}%
\]
for some function $c\left(  t\right)  $. Thus, letting $n\rightarrow\infty$,
we obtain from (\ref{m1}) that%
\begin{equation}
\left\{
\begin{array}
[c]{lll}%
u_{t}+u_{x}+u_{xxx}=f,&&\text{ in }\left(  0,L\right)  \times\left(  0,T\right)
\text{,}\\
u=c\left(  t\right), && \text{ in }\omega\times\left(  0,T\right)  \text{.}%
\end{array}
\right.  \label{m11}%
\end{equation}
Let $w_{n}=u_{n}-u$ and $f_{n}=-\frac{1}{2}\partial_{x}\left(  u_{n}%
^{2}\right)  -f-Gu_{n}$. Note first that,
\begin{equation}
\int_{0}^{T}\left\Vert Gw_{n}\right\Vert _{L^{2}\left( 0,L\right)  }%
^{2}dt=\int_{0}^{T}\left\Vert Gu_{n}\right\Vert _{L^{2}\left( 0,L\right)
}^{2}dt+\int_{0}^{T}\left\Vert Gu\right\Vert _{L^{2}\left(  0,L\right)
}^{2}dt-2\int_{0}^{T}\left(  Gu_{n},Gu\right)  _{L^{2}\left(0,L\right)
}dt\rightarrow0\text{.}\label{m12}%
\end{equation}
Since $w_{n}\rightharpoonup0$ in $L^{2}\left(  0,T;H^{1}\left(  0,L\right)
\right)  $, we infer from Rellich's Theorem that $\int_{0}^{L}w_{n}\left(
y,t\right)  dy\rightarrow0$ strongly in $L^{2}\left(  0,T\right)  $. Combining (\ref{m7}) and (\ref{m12}), we have that
\[
\int_{0}^{T}\int_{0}^{L}\left\vert w_{n}\right\vert ^{2}\longrightarrow
0\text{.}%
\]
Thus,%
\[
w_{n,t}+w_{n,x}+w_{n,xxx}=f_{n}\text{,}%
\]%
\[
f_{n}\rightharpoonup0\text{ in }L^{2}\left(  0,T;H^{-1}\left(  0,L\right)
\right)  \text{ ,}%
\]
and,
\[
w_{n}\longrightarrow0\text{ in }L^{2}\left(  0,T;L^{2}\left(  0,L\right)
\right)  \text{,}%
\]
so,
\[
\partial_{x}\left(  w_{n}^{2}\right)  \longrightarrow w_{x}^{2}%
\]
in the sense of distributions. Therefore, $f=-\frac{1}{2}\partial_{x}\left(
u^{2}\right)  $ e $u\in L^{2}\left(  0,T;L^{2}\left(  0,L\right)  \right)  $
satisfies%
\[
\left\{
\begin{array}
[c]{lll}%
u_{t}+u_{x}+u_{xxx}+\frac{1}{2}\left(  u^{2}\right)  _{x}=0, &&\text{ in }\left(
0,L\right)  \times\left(  0,T\right),\\
u=c\left(  t\right),&&  \text{ in }\omega\times\left(  0,T\right).
\end{array}
\right.
\]
The first equation gives $c^{\prime}\left(  t\right)  =0$ which, combined with
unique continuation property (see Appendix \ref{Apendice}), yields that $u\left(
x,t\right)  =c$ for some constant $c\in\mathbb{R}$. Since $u(L,t)=0$, we
deduce that%
\[
0=u\left(  L,t\right)  =c\text{,}%
\]
and $u_{n}$ converges strongly to $0$ in $L^{2}\left(  0,T;L^{2}\left(
0,L\right)  \right)  $. We can pick some time $t_{0}\in\left[  0,T\right]  $
such that $u_{n}\left(  t_{0}\right)  $ tends to $0$ strongly in $L^{2}\left(
0,L\right)  $. Since%
\[
\left\Vert u_{n}\left(  0\right)  \right\Vert _{L^{2}\left(  0,L\right)  }%
^{2}\leq\left\Vert u_{n}\left(  t_{0}\right)  \right\Vert _{L^{2}\left(
0,L\right)  }^{2}+\int_{0}^{t_{0}}\left\Vert Gu_{n}\right\Vert _{L^{2}\left(
0,L\right)  }^{2}dt\text{,}%
\]
it is inferred that $\alpha_{n}=\left\Vert u_{n}\left(  0\right)  \right\Vert
_{L^{2}\left(  0,L\right)  }\longrightarrow0$, as $n\rightarrow\infty$, which
is in contradiction with the assumption $\alpha>0$.

\begin{itemize}
\item[ii.] $\alpha=0$.
\end{itemize}

First, note that $\alpha_{n}>0$, for all $n$. Set $v_{n}=u_{n}/\alpha_{n}$,
for all $n\geq1$. Then,%
\[
v_{n,t}+v_{n,x}+v_{n,xxx}-Gv_{n}+\frac{\alpha_{n}}{2}\left(  v_{n}^{2}\right)
_{x}=0
\]
and%
\begin{equation}
\int_{0}^{T}\left\Vert Gv_{n}\right\Vert _{L^{2}\left( 0,L\right)  }%
^{2}dt<\frac{1}{n}\text{.}\label{m13}%
\end{equation}
Since
\begin{equation}
\left\Vert v_{n}\left(  0\right)  \right\Vert _{L^{2}\left(  0,L\right)
}=1\text{,}\label{m14}%
\end{equation}
the sequence $\left\{  v_{n}\right\}  $ is bounded in $L^{2}\left(
0,T;L^{2}\left(  0,L\right)  \right) \cap L^{2}\left(  0,T;H^{1}\left(  0,L\right)  \right)  $, and,
therefore, $\left\{  \partial_{x}\left(  v_{n}^{2}\right)  \right\}  $ is
bounded in $L^{2}\left(  0,T;L^{2}\left(  0,L\right)  \right)  $. Then,
$\alpha_{n}\partial_{x}\left(  v_{n}^{2}\right)  $ tends to $0$ in this space.
Finally, $$\int_{0}^{T}\left\Vert Gv\right\Vert _{L^{2}\left(  0,L\right)
}^{2}dt=0.$$ Thus, $v$ is solution of
\[
\left\{
\begin{array}
[c]{lll}%
v_{t}+v_{x}+v_{xxx}=0,&&\text{ in }\left(  0,L\right)  \times\left(  0,T\right),\\
v=c\left(  t\right),&&  \text{ in }\omega\times\left(  0,T\right).
\end{array}
\right.
\]
We infer that $v\left(  x,t\right)  =c\left(  t\right)  =c$, thanks to Holmgren's
Theorem, and that $c=0$ due the fact that $v\left(  L,t\right)  =0$. 

According to the previous fact, pick a time $t_{0}\in\left[  0,T\right]  $ such that
$v_{n}\left(  t_{0}\right)  $ converges to $0$ strongly in $L^{2}\left(
0,L\right)  $. Since%
\[
\left\Vert v_{n}\left(  0\right)  \right\Vert _{L^{2}\left(  0,L\right)  }%
^{2}\leq\left\Vert v_{n}\left(  t_{0}\right)  \right\Vert _{L^{2}\left(
0,L\right)  }^{2}+\int_{0}^{t_{0}}\left\Vert Gv_{n}\right\Vert _{L^{2}\left(
0,L\right)  }^{2}dt\text{,}%
\]
we infer from (\ref{m13}) that $\left\Vert v_{n}\left(  0\right)
\right\Vert _{L^{2}\left(  0,L\right)  }\rightarrow0$, which contradicts to
(\ref{m14}). The proof is complete.
\end{proof}

\section{Comments and extensions for other models}\label{Sec4}
In this section we intend to analyze the results obtained in this manuscript as well as to present some extensions of these results for other models.  

\subsection{Comments of the results} In this work we deal with the KdV equation from a control point of
view posed in a bounded domain $(0,L)\subset\mathbb{R}$ with a \textit{forcing term} $Gh$ added as a control input, namely:%
\begin{equation}
\left\{
\begin{array}
[c]{lll}%
u_{t}+u_{x}+uu_{x}+u_{xxx}+Gh=0,&&\text{in }\left(  0,L\right)
\times\left(  0,T\right),\\
u\left(  0,t\right)  =u\left(  L,t\right)  =u_{x}\left(  L,t\right)
=0,&&\text{in }\left(  0,T\right),\\
u\left(x,0\right)  =u_0\left(x\right),&&  \text{in }\left(
0,L\right)  .
\end{array}
\right.  \label{FC}%
\end{equation}
Here $G$ is the operator defined by \eqref{I11a}.

The result presented in this manuscript gives us a new \textit{"weak" forcing mechanism} that ensures  global stability to the  system \eqref{FC}. In fact, Theorem \ref{main1} guarantees a lower cost to control the system proposed in this work and, consequently, to derive a good result related with the stabilization problem as compared with existing results in the literature. 

The interested readers can look at the following article \cite{Pazoto}, related to what we call \textit{"strong" forcing mechanism}. Indeed,  in this article, the author proposed the source term as $1_{\omega}h(x,t)$, that is,  the mechanism proposed does not remove a medium term as seen in $Gh$ defined by \eqref{I11a}. 

Finally, observe that the approach used to prove our main result as well as the weak mechanism can be extended for \textit{KdV-type equation} and for \textit{a model of strong interaction between internal solitary waves.} Let us breviary describe these systems and the results that can be derived  by using the same approach applied in this work.

\subsection{KdV-type equation} Fifth-order KdV type equation can be written as
\be\label{kaw}
u_t+u_x+\beta u_{xxx}+ \alpha u_{xxxxx}+uu_x=0,
\ee
where $u=u(t,x)$ is a real-valued function of two real variables $t$ and $x$, $\alpha$ and $\beta$ are real constants. When we consider, in \eqref{kaw},  $\beta=1$ and $\alpha=-1$, T. Kawahara  \cite{Kawahara} introduced a dispersive partial differential equation which describes one-dimensional propagation of small-amplitude long waves in various problems of fluid dynamics and plasma physics, the so-called Kawahara equation. 

With the damping mechanism proposed in this manuscript, we can investigate the stabilization problem, already mentioned in this article, for the following system
\begin{equation}
\left\{
\begin{array}
[c]{lll}%
u_{t}+u_{x}+uu_{x}+u_{xxx}-u_{xxxxx}+Gh=0,  &  & \text{in }(  0,T)  \times(  0,L)  \text{,}\\
u(  t,0)  =u(  t,L)  =u_{x}(  t,0)=u_{x}(  t,L)  =u_{xx}(  t,L)=0, &  & \text{in }(  0,T)  \text{,}\\
u(  0,x)  =u_{0}(  x)  &  & \text{in }( 0,L), 
\end{array}
\right.  \label{S1}%
\end{equation}
and $G$ as in  \eqref{I11}.

In fact, a similar result can be obtained with respect to global stabilization. Obviously, we need to pay attention to the unique continuation property for this case (for our case see Appendix \ref{Apendice}). However, due the Carleman estimate provided by Chen in \cite{MoChen}, it is possible to show the unique continuation property for the Kawahara operator.

\subsection{Model of strong interaction between internal solitary waves} We can consider a model of two KdV equations types. Precisely, in \cite{geargrinshaw}, a complex system of equations was derived by Gear and Grimshaw to model the strong interaction of two-dimensional, long, internal gravity waves propagating on neighboring pycnoclines in a stratified fluid. It has the structure of a pair of Korteweg-de Vries equations coupled through both dispersive and nonlinear effects and has been the object of intensive research in recent years. In particular, we also refer to \cite{bona} for an extensive discussion on the physical relevance of the system.

An interesting possibility now presents itself is the study of the stability properties when the model is posed on a bounded domain $(0,L)$, that is, to study the Gear-Grimshaw system with only a weak damping mechanism, namely,
\begin{equation}
\label{gg1}
\begin{cases}
u_t + uu_x+u_{xxx} + a_3v_{xxx} + a_1vv_x+a_2 (uv)_x =0, & \text{in} \,\, (0,L)\times (0,\infty),\\
c v_t +rv_x +vv_x+a_3b_2u_{xxx} +v_{xxx}+a_2b_2uu_x+a_1b_2(uv)_x  +Gv=0,  & \text{in} \,\, (0,L)\times (0,\infty), \\
u(x,0)= u^0(x), \quad v(x,0)=  v^0(x), & \text{in} \,\, (0,L),
\end{cases}
\end{equation}
satisfying the following boundary conditions
\begin{equation}\label{gg2}
\begin{cases}
u(0,t)=0,\,\,u(L,t)=0,\,\,u_{x}(L,t)=0, & \text{in} \,\, (0,\infty),\\
v(0,t)=0,\,\,v(L,t)=0,\,\,v_{x}(L,t)=0, & \text{in} \,\, (0,\infty),
\end{cases}
\end{equation}
where $a_1, a_2, a_3, b_2, c, r$ are constants in  $\mathbb{R}$ assuming physical relations. Here, as in all work, $Gv$ is the weak forcing term defined in \eqref{I11}.

The stabilization problem for the system \eqref{gg1}-\eqref{gg2} was addressed in \cite{capistrano}. The author showed that the total energy associated with the model decay exponentially when $t$ tends to $\infty$, considering two damping mechanisms $Gu$ and $Gv$ acting in both equations of \eqref{gg1}. However, even though the  system \eqref{gg1} has the structure of a pair of KdV equations, it cannot be decoupled into two single KdV equations\footnote{Remark that the uncoupling is not possible in \eqref{gg1} unless $r = 0$.} and, in this case, the result shown in this work is not a consequence of the results proved in \cite{capistrano}.
 
Lastly, B\'arcena--Petisco \textit{et al.} in a recent work \cite{BaGuePa}, addressed the controllability problem for the system \eqref{gg2}, by means of a control $1_{\omega}f(x,t)$, supported in an interior open subset of the domain and acting on one equation only. The proof consists mainly on proving the controllability of the linearized system, which is done by getting a Carleman estimate for the adjoint system.  With this result in hand, by using $Gv$ as a control mechanism, instead of $1_{\omega}f(x,t)$, it is possible to prove the global stabilization for the model \eqref{gg2}. As in the KdV (see Appendix \ref{Apendice}) and Kawahara cases, we need to prove a unique continuation property to achieve the stabilization problem, however with the Carleman estimate \cite[Proposition 3.2]{BaGuePa}, we are able to derive this property for the Gear--Grimshaw operator.

\subsection{About exact controllability results}
Now, we will discuss the exact controllability property of the KdV system
\begin{equation}
\left\{
\begin{array}
[c]{lll}%
u_{t}+u_{x}+uu_{x}+u_{xxx}=Gw,&&\text{in }\left(  0,L\right)
\times\left(  0,T\right)\text{,}\\
u\left(  0,t\right)  =u\left(  L,t\right)  =u_{x}\left(  L,t\right)
=0, &&\text{in }(0,T)\text{,}\\
u\left(  x,0\right)  =u_{0}\left(  x\right), && \text{in } \left(
0,L\right)  .
\end{array}
\right.  \label{h1}%
\end{equation}
with weak source term $G$ defined by
\begin{equation*}
Gw\left(  x,t\right)  =1_{\omega}\left(  w\left(  x,t\right)  -\frac
{1}{\left\vert \omega\right\vert }\int_{\omega}w\left(  x,t\right)  dx\right),
\end{equation*}
where $\omega\subset(0,L) $ and
$1_{\omega}$ denotes the characteristic function of the set $\omega$. We arises in the following open question:

\vspace{0.2cm}

\noindent\textbf{Control problem:} \textit{Given an initial state $u_{0}$ and a terminal state $u_{1}$ in $L^2(0,L)$, can one find an appropriate control input $w\in L^2(\omega\times(0,T))$ so that the equation
(\ref{h1}) admits a solution $u$ which satisfies $u\left(  \cdot,0\right)
=u_{0}$ and $u\left(  \cdot,T\right)  =u_{1}$?}

\vspace{0.2cm}

It is important to point out that we do not expect that system \eqref{h1} has the  exact control property as above mentioned when we consider the control $w$ in $L^2(\omega\times(0,T))$. Roughly speaking, (large) negative waves propagate from the right to the left. Therefore, a negative wave cannot be generated by a left control, that means, when the control is acting far from the endpoint $x = L $, i.e. in some interval $\omega=(l_1,l_2)$ with $0<l_1<l_2<L$, then there is no chance to control exactly the state function on $(l_2, L)$,  (see e.g. \cite{Rosier2}). However, we believe that using the techniques proposed in \cite{CaRoPa} (or in \cite{GG}), i. e., considering the weight Sobolev spaces (or control more regular), there is a chance to get positive answer for exact control problem in the right hand side of the domain, precisely, considering $\omega=(L-\epsilon,L)$, with the weak control as defined in \eqref{h2}.

\subsection{A natural damping mechanism}When we consider the boundary condition of \eqref{FC} with $G=0$, a natural feedback law is revealed as we can see below 
\begin{equation}\label{energyFC}
\frac{\mathrm{d} E}{\mathrm{d} t}=-\frac{1}{2}\left|u_{x}(0, t)\right|^{2}
\end{equation}
with
$$
E(t)=\frac{1}{2} \int_{0}^{L}|u(x, t)|^{2} \mathrm{d} x.
$$
The energy dissipation law \eqref{energyFC} shows that the boundary value problem under consideration is dissipated through the extreme $x =0$ and leads one to guess that any solution of \eqref{FC}, with $G=0$,  may decay to zero as $t\to\infty$. In order to answer this question, a really nonlinear method is needed, and the method applied here can not be addressed to solve it.

\appendix
\section{Unique continuation property}\label{Apendice}
This appendix aims to provide a sketch of how to obtain the unique continuation property through a Carleman estimate.
\subsection{Carleman inequality}
%Assume that $\omega =(l_1,l_2)$ with 
%\[
%0<l_1<l_2<L.
%\]
Pick any function $\psi \in C^3([0,L])$ with 
\ba
&&\psi >0\textrm{ in } [0,L], \label{C1} \quad|\psi '|>0, \quad \psi ''<0,\quad \textrm{ and } \quad\psi '\psi ''' <0 \textrm{ in } [0,L],\\
%\setminus \omega
&&\psi '(0)<0 , \quad \psi '(L) >0, \quad\textrm{ and } \quad
\max_{x\in [0,L]}\psi (x)= \psi (0)=\psi (L).  \label{C4}
%\\
%&&\psi (0)<\frac{4}{3} \psi (l_3),\label{C5}
\ea
%for some $l_3\in (l_1,l_2)$. 
%A convenient function $\psi$ is defined on $[0,L] $
%%\setminus \omega$ 
%as
%$$
%\varepsilon x^3-x^2-x+c_1&\textrm{ if } x\in [0,L]
%$$
%%\[
%%\psi(x) =\left\{
%%\begin{array}{ll}
%%\varepsilon x^3-x^2-x+c_1&\textrm{ if } x\in [0,L],\\
%%-\varepsilon x^3 +ax +c_2&\textrm{ if } x\in [l_2,L]
%%\end{array} 
%%\right.
%%\]
%with $\varepsilon$ and $c_1$ conveniently chosen. Note first that 
%%$\psi (l_1)=\psi (l_2)$ and 
%$\psi (0)=\psi (L)$ if, and only if, 
%\[
%a=(L-l_2)^{-1} (l_1^2+l_1-\varepsilon l_2^3-\varepsilon l_1^3 + \varepsilon L^3), \qquad c_1=c_2 -\varepsilon L^3 + aL.
%\] 
%Then $a>0$, $c_1-c_2>0$ and \eqref{C2}-\eqref{C3} hold provided that $0<\varepsilon \ll1$.  \eqref{C1} and \eqref{C5} hold for 
%$c_2\gg1$. \eqref{C4} is easy to satisfy.

Set
\be\label{psi}
\vf (t,x) =\frac{\psi (x) }{t(T-t)} \cdot
\ee
For $f\in L^2(0,T; L^2(0,L))$ and $q_0\in L^2(0,L)$, let $q$ denote the solution of the system
\begin{equation}
\left\{
\begin{array}
[c]{lll}%
q_t + q_x+q_{xxx} =f,&&t\in (0,T),\  x\in (0,L) ,\\
q(t,0)=q(t,L)=q_x(t,L)=0 && t\in (0,T),\\
q(0,x) =q_0(x), &&  \text{in }\left(
0,L\right).
\end{array}
\right.  \label{A1}
\end{equation}
Thus, the following result is a direct consequence of the Carleman estimate proved by \cite{CaRoPa}.
\begin{proposition}
\label{prop10} Pick any $T>0$. 
There exist two constants $C>0$ and $s_0 >0$ such that any $f\in L^2(0,T;L^2(0,L))$, any $q_0\in L^2(0,L)$ and any 
$s\ge s_0$,
%$s\ge C(T^2+T^{\frac{3}{2}})$, 
the solution $q$ of
\eqref{A1} fulfills
\begin{equation}
\int_0^T\!\! \int_0^L [s\vf |q_{xx}|^2 +(s\vf )^3 |q_x|^2 +(s\vf )^5 |q|^2 ]e^{-2s\vf} dx dt\le C\left( \int_0^T\!\!\int_0^L |f|^2 e^{-2s\vf} dxdt \right),
\label{C7}
\end{equation}
where $\varphi$ is defined by \eqref{A1} and $\psi$  satisfies \eqref{C1}--\eqref{C4}.
\end{proposition}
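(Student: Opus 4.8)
The statement is advertised as a direct consequence of the Carleman estimate of \cite{CaRoPa}, which concerns the same operator $\partial_t+\partial_x+\partial_x^3$ under the boundary conditions of \eqref{A1} and the same class of weights $\vf=\psi/[t(T-t)]$ from \eqref{psi}. Thus the first thing I would do is simply match notation: check that any $\psi\in C^3([0,L])$ obeying \eqref{C1}--\eqref{C4} is admissible for that estimate, and read off the constants $C$ and $s_0$ together with the weighted left-hand side of \eqref{C7}. If instead the cited estimate is phrased for the adjoint problem (such as the backward system \eqref{n7aa}, whose extra boundary condition sits at $x=0$), I would pass to the forward problem \eqref{A1} by the reflection $x\mapsto L-x$, $t\mapsto T-t$, checking that it sends $-\partial_t-\partial_x-\partial_x^3$ to $\partial_t+\partial_x+\partial_x^3$ and that the structural and sign hypotheses \eqref{C1}--\eqref{C4} on the reflected weight are preserved.

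For completeness I would also be prepared to reproduce the estimate from scratch by the classical conjugation argument. Set $w:=e^{-s\vf}q$ and compute the conjugated operator $P_sw:=e^{-s\vf}(\partial_t+\partial_x+\partial_x^3)(e^{s\vf}w)$. Using $e^{-s\vf}\partial_x^3(e^{s\vf}w)=(\partial_x+s\vf_x)^3w$, this expands as
\[
P_sw=w_t+w_{xxx}+3s\vf_x w_{xx}+3\bigl(s\vf_{xx}+s^2\vf_x^2\bigr)w_x+\bigl(s\vf_{xxx}+3s^2\vf_x\vf_{xx}+s^3\vf_x^3\bigr)w+w_x+s(\vf_t+\vf_x)w .
\]
I would then split $P_s=P_1+P_2$ into its formally self-adjoint and skew-adjoint parts (the odd-order pieces $w_t$, $w_{xxx}$ and the transport terms going into $P_2$, the $w_{xx}$ term and the zeroth-order multiplications into $P_1$), and square in $L^2((0,T)\times(0,L))$:
\[
\|e^{-s\vf}f\|^2=\|P_1w\|^2+\|P_2w\|^2+2(P_1w,P_2w).
\]

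The substance lies in the cross term $2(P_1w,P_2w)$: integrating by parts in $t$ and $x$ and discarding the nonnegative squares, its leading contributions reproduce exactly the weighted quantities $s\vf|w_{xx}|^2$, $(s\vf)^3|w_x|^2$ and $(s\vf)^5|w|^2$ (the powers $1,3,5$ being dictated by the third-order operator and by the growth of $\vf=\psi/[t(T-t)]$), while every remaining interior term is of strictly lower order in $s$ and is absorbed once $s\ge s_0$. Returning to $q=e^{s\vf}w$ then gives \eqref{C7}.

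The main obstacle is the bookkeeping of the boundary terms at $x=0$ and $x=L$ produced by the $x$-integrations by parts. The conditions $q(t,0)=q(t,L)=q_x(t,L)=0$ force $w(t,0)=w(t,L)=w_x(t,L)=0$ and annihilate many of them, but one is left with traces such as $w_x(t,0)$, $w_{xx}(t,0)$ and $w_{xx}(t,L)$ carrying powers of $s\vf$ and factors $\vf_x=\psi'/[t(T-t)]$. It is precisely the sign hypotheses in \eqref{C1}--\eqref{C4}---the prescribed opposite signs of $\psi'$ at the two endpoints, together with $\psi''$ and $\psi'\psi'''$---that force these boundary contributions to carry a favorable sign, so that they may be dropped (the surviving $q_x(\cdot,0)$ trace appearing with the correct sign). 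Checking this sign analysis carefully at both endpoints, and confirming that the temporal blow-up of $1/[t(T-t)]$ at $t=0,T$ disposes of the $t$-boundary contributions coming from $w_t$, is the delicate part; everything else is routine once the self-adjoint/skew-adjoint decomposition is fixed.
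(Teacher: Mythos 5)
Your proposal is correct and takes essentially the same approach as the paper: the paper supplies no proof of Proposition \ref{prop10}, asserting only that it is a direct consequence of the Carleman estimate established in \cite{CaRoPa}, which is exactly the content of your first paragraph (matching the weight class \eqref{C1}--\eqref{C4} and the boundary conditions of \eqref{A1}, and, if necessary, passing between the forward and adjoint systems via the reflection $x\mapsto L-x$, $t\mapsto T-t$). Your supplementary conjugation sketch is not something the paper carries out, but it is the standard mechanism behind the estimate in \cite{CaRoPa} itself, so it complements rather than diverges from the paper's route.
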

Actually,  Proposition \ref{prop10} will play a great role in establishing the unique continuation property describes below. 

\begin{corollary}\label{UCP1}
Let $L>0$ and $T>0$ be two real numbers, and let $\omega\subset\left(
0,L\right)  $ be a nonempty open set. If $v\in L^{\infty}\left(
0,T;H^{1}\left(  0,L\right)  \right)  $ solves%
\[
\left\{
\begin{array}
[c]{lll}%
v_{t}+v_{x}+v_{xxx}+vv_{x}=0, &  & \text{in }\left(  0,L\right)  \times\left(
0,T\right),\\
v\left(  0,t\right)  =0, &  & \text{in }\left(
0,T\right),\\
v=c, &  & \text{in } (l',L)\times\left(  0,T\right),
\end{array}
\right.
\]
with $0<l'<L$ and $c\in\mathbb{R}$, then $v\equiv c$ in $\left(  0,L\right)  \times\left(  0,T\right)  $.
\end{corollary}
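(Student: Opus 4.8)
The plan is to read the Corollary off the Carleman inequality \eqref{C7} of Proposition \ref{prop10}, using the large parameter $s$ both to absorb the nonlinearity and to kill the endpoint contribution. First I would set $y:=v-c$. Since $v\equiv c$ on $(l',L)\times(0,T)$, the function $y$ vanishes identically there together with all of its $x$-derivatives; in particular $y(t,L)=y_x(t,L)=0$, while at the left endpoint $y(t,0)=v(0,t)-c=-c$. Because $y_x=v_x$, subtracting the constant does not change the evolution, and $y$ solves the linear system \eqref{A1} with source
\[
g:=-v\,v_x=-v\,y_x .
\]
Since $v\in L^\infty(0,T;H^1(0,L))$ and $H^1(0,L)\hookrightarrow L^\infty(0,L)$ in one space dimension, both $y_x$ and $vy_x$ lie in $L^\infty(0,T;L^2(0,L))\subset L^2(0,T;L^2(0,L))$, so $g\in L^2(0,T;L^2(0,L))$ and, pointwise, $|g|\le \|v\|_{L^\infty((0,T)\times(0,L))}\,|y_x|$; thus Proposition \ref{prop10} is applicable.

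Next I would feed this into \eqref{C7}. The two right-endpoint conditions $y(t,L)=y_x(t,L)=0$ match \eqref{A1}; the only discrepancy is the nonzero trace $y(t,0)=-c$, which I would handle either by subtracting a smooth corrector $c\,\rho(x)$ with $\rho(0)=1$, $\rho\equiv0$ on $[l',L]$ and localized near $x=0$, or, equivalently, by carrying the $x=0$ boundary term $\mathrm{BT}_0$ through the derivation of \eqref{C7}. Inserting $|g|\le \|v\|_{L^\infty}\,|y_x|$ gives
\[
\int_0^T\!\!\int_0^L\big[(s\varphi)^5|y|^2+(s\varphi)^3|y_x|^2\big]e^{-2s\varphi}\,dx\,dt\le C\int_0^T\!\!\int_0^L|y_x|^2 e^{-2s\varphi}\,dx\,dt+|\mathrm{BT}_0|.
\]
Because $\varphi=\psi/(t(T-t))$ is bounded below on $(0,L)\times(0,T)$ by the positive constant $4(\min\psi)/T^2$, for $s$ large one has $(s\varphi)^3\ge 2C$ everywhere, and the first term on the right is absorbed into the left-hand side.

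The remaining point, and the hard part, is that the endpoint contribution is negligible, and here the normalization \eqref{C4} is decisive. The term $\mathrm{BT}_0$ (equivalently, the corrector source) lives at or near $x=0$ and therefore carries the weight $e^{-2s\varphi(t,0)}=e^{-2s\psi(0)/(t(T-t))}$; since $\psi(0)=\max_{[0,L]}\psi$, this is the smallest weight occurring anywhere in the inequality. If $y\not\equiv0$ then $\{y\ne0\}$ is a set of positive measure reaching into the interior, where $\psi<\psi(0)$, so the left-hand mass $\int_0^T\!\!\int_0^L(s\varphi)^5|y|^2e^{-2s\varphi}$ decays strictly more slowly in $s$ than $|\mathrm{BT}_0|$; letting $s\to\infty$ yields a contradiction. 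Hence $y\equiv0$, that is $v\equiv c$ on $(0,L)\times(0,T)$, and the boundary condition $v(0,t)=0$ forces $c=0$. I expect the genuine obstacle to be exactly this interplay between the left trace and the constant $c$: \eqref{C7} is tailored to the homogeneous trace at $x=0$, while $y$ carries the value $-c$ there, and it is only the geometric hypotheses \eqref{C1}--\eqref{C4} on $\psi$---above all that $\psi$ is maximal at both endpoints---that render the uncontrolled endpoint term exponentially subordinate to the interior terms. The nonlinearity $vv_x$, by contrast, is harmless, being swallowed by the cubic weight $(s\varphi)^3$ on the left.
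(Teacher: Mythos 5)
Your proposal takes a genuinely different route from the paper --- no time regularization, a boundary corrector, and a final $s\to\infty$ limit --- and it has two concrete gaps.

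The first gap is regularity. The left-hand side of \eqref{C7} controls $\int_0^T\!\!\int_0^L s\varphi|q_{xx}|^2e^{-2s\varphi}\,dx\,dt$, and the weighted integrations by parts behind such an estimate require the solution itself to lie in (roughly) $L^2(0,T;H^3(0,L))\cap H^1(0,T;L^2(0,L))$. Your $y=v-c$ inherits only $L^\infty(0,T;H^1(0,L))$ from the hypothesis, so the left-hand side of \eqref{C7} is not even known to be finite for $y$; the estimate cannot be invoked for $y$ merely because the source $g=-vy_x$ lies in $L^2(0,T;L^2(0,L))$. This is precisely the obstruction the paper opens its proof with (``we do not expect that $v$ belongs to $L^{2}(0,T;H^{3})\cap H^{1}(0,T;L^{2})$ \dots\ we have to smooth it''), and it is the reason for the entire mollification apparatus $v^{[h]}(x,t)=h^{-1}\int_t^{t+h}v(x,s)\,ds$: from the equation, $v^{[h]}_{xxx}=-v^{[h]}_t-v^{[h]}_x-(vv_x)^{[h]}\in L^\infty(0,T';L^2)$, so $v^{[h]}\in L^\infty(0,T';H^3)$, and only then is \eqref{C7} applied, the term $|v\,v_x^{[h]}|^2\le C|v_x^{[h]}|^2$ absorbed (your Claim-1-type step, which is fine), and the conclusion extracted at \emph{fixed} $s=s_0$ by letting $h\to0$, since the commutator $\int\!\!\int|(vv_x)^{[h]}-v\,v_x^{[h]}|^2e^{-2s\varphi}\to0$. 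Your ``thus Proposition \ref{prop10} is applicable'' skips what the paper treats as the main technical content.

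The second gap is your treatment of the left endpoint, which you rightly flag but do not actually resolve. Option (b), carrying a boundary term $\mathrm{BT}_0$ ``through the derivation of \eqref{C7}'', is not available: the paper supplies \eqref{C7} only for the homogeneous conditions of \eqref{A1}, and an inhomogeneous version would involve traces such as $y_x(0,t)$, $y_{xx}(0,t)$ that are neither controlled nor even defined at your regularity. Option (a), the corrector $c\rho$ with $\rho$ supported in $[0,\delta]$, produces a source $c(\rho'+\rho''')$ supported on an \emph{interval}, not at the point $x=0$; since $\psi'(0)<0$, the dominant weight on that support is $e^{-2s\varphi(t,\delta)}$, which is exponentially \emph{larger} than $e^{-2s\varphi(t,0)}$ as $s\to\infty$. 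Hence your comparison only forces $y=0$ on $\{x:\psi(x)<\psi(\delta)\}$ and says nothing on $[0,\delta]$, where $\psi\ge\psi(\delta)$ --- exactly the region you must reach, so your claim that $\{y\neq0\}$ must ``reach into the interior'' is unsupported (a priori $y$ could live only near $x=0$). A repair by exhausting $(0,L)$ along $\delta\downarrow0$ is conceivable but is not in your proposal and still presupposes the (missing) regularity step. To be fair, the paper's own write-up is also loose about the nonzero constant (it asserts $v^{[h]}\in W^{1,\infty}(0,T';H_0^1)$, which silently uses $c=0$ at the right endpoint), but that looseness does not close either gap in your argument.
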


\begin{proof} We do not expect that $v$ belongs to $$
L^{2}\left(0, T ; H^{3}(0, l)\right) \cap H^{1}\left(0, T ; L^{2}(0, l)\right).
$$ In this way, we have to smooth it. For any function $v=v(x, t)$ and any $h>0$, let us consider $v^{[h]}(x, t)$ defined by
\[
v^{[h]}(x, t):=\frac{1}{h} \int_{t}^{t+h} v(x, s) ds.
\]
Remember that if $v \in L^{p}(0, T ; V),$ where $1 \leq p \leq+\infty$ and $V$ denotes any Banach space, we have that $$v^{[h]} \in W^{1, p}(0, T-h ; V)$$
$$\left\|v^{[h]}\right\|_{L^{p}(0, T-h ; V)} \leq\|v\|_{L^{p}(0, T ; V)},$$
and
\[
v^{[h]} \rightarrow v \quad \text { in } L^{p}\left(0, T^{\prime} ; V\right) \text { as } h \rightarrow 0,
\]
for $p<\infty$ and $T^{\prime}<T$. 

Choose any $T^{\prime}<T .$ Thus, for a small enough number $h$, $$v^{[h]} \in W^{1, \infty}\left(0, T^{\prime} ; H_{0}^{1}(0, l)\right)$$ and $v^{[h]}$ is solution of 
\begin{equation}\label{3.4}
v_t^{[h]}+v_x^{[h]}+ v_{xxx}^{[h]}+\left(vv_x\right)^{[h]}=0 \quad \text{ in } (0, l) \times\left(0, T^{\prime}\right),
\end{equation}
\begin{equation}\label{3.5}
v^{[h]}(0, t)=0 \quad \text{ in } \left(0, T^{\prime}\right)
\end{equation}
and
\begin{equation}\label{3.6}
v^{[h]} \equiv c \quad \text{ in }\left(l^{\prime}, l\right) \times\left(0, T^{\prime}\right),
\end{equation}
for some $c\in\mathbb{R}$. Since $v \in L^{\infty}\left(0, T ; H^{1}(0, l)\right)$ and $vv_x\in L^{\infty}\left(0, T ; L^{2}(0, l)\right)$, therefore, it follows from \eqref{3.4}, that $$v_{x x x}^{[h]} \in L^{\infty}\left(0, T^{\prime} ; L^{2}(0, l)\right)$$ and thus $$v^{[h]} \in L^{\infty}\left(0, T^{\prime} ; H^{3}(0, l)\right) .$$ 
Thanks to the Carleman estimate \eqref{C7}, we get that
\begin{eqnarray}
\int_0^{T^{\prime}}\!\! \int_0^L [s\vf |v^{[h]}_{xx}|^2 +(s\vf )^3 |v^{[h]}_x|^2 +(s\vf )^5 |v^{[h]}|^2 ]e^{-2s\vf} dx dt&\le &C\left( \int_0^{T^{\prime}}\!\!\int_0^L |f|^2 e^{-2s\vf} dxdt \right)\nonumber\\
%&\leq &C_{0} \int_{0}^{T^{\prime}} \int_{0}^{l}\left|\left(vv_x\right)^{[h]}\right|^{2} e^{-2s\vf}d x d t\nonumber \\
&\leq &2 C_{0} \int_{0}^{T^{\prime}} \int_{0}^{l}\left|vv_x^{[h]}\right|^{2} e^{-2s\vf}d x d t \label{3.7}\\
&+&2 C_{0} \int_{0}^{T^{\prime}} \int_{0}^{l}\left|\left(vv_x\right)^{[h]}-vv_x^{[h]}\right|^{2} e^{-2s\vf} d x d t\nonumber \\
&:=&I_{1}+I_{2}\nonumber,
\end{eqnarray}
for any $s \geq s_{0}$ and $\vf (t,x) $ defined by \eqref{psi}.

\vspace{0.2cm}

\noindent \textbf{Claim 1:} \textit{$I_{1}$ is bounded and can be absorbed by the left-hand side of \eqref{3.7}.}

\vspace{0.2cm}

In fact, since $v \in L^{\infty}\left(0, T ; L^{\infty}(0, l)\right),$ we have
\begin{equation}\label{3.8}
I_{1} \leq C \int_{0}^{T^{\prime}} \int_{0}^{l}\left|v_x^{[h]}\right|^{2} e^{-2s\vf} d x d t,
\end{equation}
for some constant $C>0$ which does not depend on $h$. Comparing the powers of $s$ in the right-hand side of \eqref{3.8} with those in the left-hand side of \eqref{3.7} we deduce that the $\operatorname{term} I_{1}$ in \eqref{3.7} may be dropped by increasing the constants $C_{0}$ and $s_{0}$ in a convenient way, getting Claim 1.

\vspace{0.2cm}

\noindent \textbf{Claim 2:} \textit{$I_{2} \rightarrow 0$, as $h \rightarrow 0 .$}

\vspace{0.2cm}

From now on,  fix $s$, which means, to the value $s_{0}$. Thanks to the fact that $e^{-2s_0\vf} \leq 1,$ it is sufficient to prove that
\begin{equation}\label{3.9}
\left(vv_x\right)^{[h]} \rightarrow vv_x \quad \text{ in } L^{2}\left(0, T^{\prime} ; L^{2}(0, l)\right)
\end{equation}
and
\begin{equation}\label{3.10}
vv_x^{[h]} \rightarrow vv_x \quad \text{ in }L^{2}\left(0, T^{\prime} ; L^{2}(0, l)\right).
\end{equation}

In fact, since $$vv_x \in L^{2}\left(0, T' ; L^{2}(0, l)\right)$$ \eqref{3.9} holds and, from the fact that $v\in L^{\infty}\left(0, T' ; L^{\infty}(0, l)\right)\cap L^{2}\left(0, T' ; H^{1}(0, l)\right)$, \eqref{3.10} follows, showing the Claim 2.

By Claims 1 and 2, as $h \rightarrow 0$, the integral term
\[
\int_0^{T^{\prime}}\!\! \int_0^L [s\vf |v^{[h]}_{xx}|^2 +(s\vf )^3 |v^{[h]}_x|^2 +(s\vf )^5 |v^{[h]}|^2 ]e^{-2s\vf} dx dt\to 0.
\]
On the other hand, $v^{[h]} \rightarrow v$ in $L^{2}\left(0, T^{\prime} ; L^{2}(0, l)\right)$. It follows that $v \equiv c$ in $(0, l) \times\left(0, T^{\prime}\right)$, for $c\in\mathbb{R}$. As $T^{\prime}$ may be taken arbitrarily close to $T,$ we infer that $v \equiv c$ in $(0, l) \times(0, T)$, for some $c\in\mathbb{R}$. This completes the proof of Corollary \ref{UCP1}.
\end{proof}

As a consequence of Corollary \ref{UCP1}, we give below the \textit{unique continuation property.}

\begin{corollary}
Let $L>0$, $T>0$ be real numbers, and $\omega\subset\left(
0,L\right)  $ be a nonempty open set. If $v\in L^{\infty}\left(
0,T;H^{1}\left(  0,L\right)  \right)  $ is solution of
\[
\left\{
\begin{array}
[c]{lll}%
v_{t}+v_{x}+v_{xxx}+vv_{x}=0, &  & \text{in }\left(  0,L\right)  \times\left(
0,T\right),\\
v\left(  0,t\right)  =v\left( L,t\right)  =0, &  & \text{in }\left(
0,T\right),\\
v=c, &  & \text{in } \omega\times\left(  0,T\right),
\end{array}
\right.
\]
where $c\in\mathbb{R}$, then $v\equiv c$ in $\left(  0,L\right)  \times\left(  0,T\right)  $.
\end{corollary}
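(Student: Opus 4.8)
The plan is to deduce this statement from Corollary \ref{UCP1} by a restriction argument followed by a reflection, exploiting that the extra boundary condition $v(L,t)=0$ (which is absent from Corollary \ref{UCP1}) is exactly what is needed to carry the constancy of $v$ toward the right endpoint. Since $\omega$ is a nonempty open set, it contains some interval $(a,b)$ with $0<a<b<L$, and $v=c$ on $\omega$ forces $v=c$ on $(a,b)\times(0,T)$; establishing $v\equiv c$ on $(0,L)\times(0,T)$ from this smaller piece of information suffices, so I would reduce at once to $\omega=(a,b)$.

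First I would apply Corollary \ref{UCP1} on the subinterval $(0,b)$. The restriction of $v$ lies in $L^{\infty}(0,T;H^{1}(0,b))$ and solves the same equation there, it satisfies the left boundary condition $v(0,t)=0$, and the set $(a,b)$ on which $v=c$ touches the right endpoint $b$ of $(0,b)$. Hence Corollary \ref{UCP1}, applied with $L$ replaced by $b$ and $l'$ by $a$, yields $v\equiv c$ in $(0,b)\times(0,T)$. Note that no condition at $x=b$ is required, since the constancy of $v$ near the right endpoint already supplies the data needed there.

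Next I would propagate to the right by means of the reflection $\tilde v(x,t):=v(L-x,\,T-t)$. A direct computation shows that each of $v_x$, $v_{xxx}$ and $vv_x$ picks up a factor $-1$ under $x\mapsto L-x$, while $v_t$ changes sign under $t\mapsto T-t$; consequently $\tilde v_t+\tilde v_x+\tilde v_{xxx}+\tilde v\tilde v_x=-(v_t+v_x+v_{xxx}+vv_x)(L-x,T-t)=0$, so $\tilde v$ solves the same KdV equation on $(0,L)\times(0,T)$ and again belongs to $L^{\infty}(0,T;H^{1}(0,L))$. Moreover $\tilde v(0,t)=v(L,T-t)=0$ by the right boundary condition of the original problem, and the conclusion of the previous step reflects to $\tilde v\equiv c$ on $(L-b,L)\times(0,T)$, an interval touching the right endpoint $L$. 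Applying Corollary \ref{UCP1} a second time, now to $\tilde v$ on the full interval $(0,L)$ with $l'=L-b$, gives $\tilde v\equiv c$ on $(0,L)\times(0,T)$, that is $v\equiv c$ on $(0,L)\times(0,T)$, as desired.

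The only genuinely delicate point is the rightward propagation: Corollary \ref{UCP1} carries constancy only from the right endpoint leftward, so the interior case cannot be closed by a single application. The hard part is therefore recognizing that the reflection $\tilde v(x,t)=v(L-x,T-t)$ resolves this, because it preserves the nonlinear equation \emph{exactly} and trades the left boundary condition for the right one; checking this invariance (the sign bookkeeping in the nonlinear and odd-order terms) and that the $L^{\infty}(0,T;H^{1})$ regularity is preserved under the reflection are the steps that must be verified with care, while everything else is routine.
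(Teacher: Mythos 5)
Your proof is correct and follows essentially the same route as the paper: one application of Corollary \ref{UCP1} on a left subinterval whose right endpoint lies inside $\omega$'s closure, followed by the reflection $\tilde v(x,t)=v(L-x,T-t)$ (which preserves the equation and exchanges the boundary conditions) and a second application of Corollary \ref{UCP1}. The only immaterial difference is that the paper cuts at the midpoint $l=(l_1+l_2)/2$ of $\omega$ rather than at its right endpoint, and leaves the sign bookkeeping of the reflection unstated.
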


\begin{proof}
Without loss of generality we may assume that $\omega=\left(l_{1}, l_{2}\right)$ with $0 \leq$ $l_{1}<l_{2} \leq L .$ Pick $l=\left(l_{1}+l_{2}\right) / 2 .$ First, apply  Corollary \ref{UCP1} to the function $v(x, t)$ on $(0, l) \times(0, T)$. After that, we use the following change of variable $v(L-x, T-t)$ on $(0, L-l) \times(0, T),$ to conclude that $v \equiv c$ on $(0, L) \times(0, T)$, achieving the result.
\end{proof}

\vspace{0.2cm}

\noindent\textbf{Acknowledgement:}
The author wish to thank the referee for his/her valuable comments which improved this paper. 

\vspace{0.2cm}

Capistrano–Filho was supported by CNPq 306475/2017-0, 408181/2018-4, CAPES- PRINT 88881.311964/2018-01, CAPES-MATHAMSUD 88881.520205/2020-01, MATHAMSUD 21- MATH-03 and Propesqi (UFPE).

%\noindent\textbf{Data Availability:}
%Data sharing is not applicable to this article as no new data were created or analyzed in this study.

\end{document}